\documentclass[12pt]{article} 
\usepackage{makeidx}
\usepackage[a4paper]{geometry} 
\usepackage{amsthm, amstext, amssymb, amsmath}
\usepackage{cite}

\usepackage{epsfig}
\usepackage{listings} 
\usepackage{algorithm2e}
\makeatletter
\makeatother

\newtheorem{theorem}{Theorem}[section]
\newtheorem{lemma}[theorem]{Lemma}
\newtheorem{corollary}[theorem]{Corollary}
\newtheorem{definition}[theorem]{Definition}

\newtheorem{proposition}[theorem]{Proposition} 

\def\GG{\mathcal{G}} 
\def\cC{\mathcal{C}} 
\def\bF{\mathbf{F}}
\def\G{\mathbf{G}}
\def\I{\mathcal{I}}
\def\bI{\mathbf{I}}
\def\M{\mathbf{M}}
\def\cM{\mathcal{M}}
\def\F{\mathbb{F}}
\def\N{\mathbb{N}}
\def\Q{\mathbb{Q}}
\def\R{\mathbb{R}}


\def\phi{\varphi}

\def\cl{\mbox{cl}} 
\def\to{\rightarrow}
\def\td{\mbox{td}}
\def\FOl{\mathrm{FO}(\lambda)}
\def\FOkl{\mathrm{FO}_k(\lambda)}
\def\vk{v_1, \ldots, v_k}
\def\se{\subseteq}

\def\bd{\mathrm{bd}}
\def\decomp{depth-decomposition} 

\newcommand\term[1]{{\em #1}}

\begin{document}
\title{First order convergence of matroids\thanks{
Research supported by the European Research Council under the European Union's Seventh Framework Programme (FP7/2007- 2013)/ERC grant agreement no.~259385.
The work of the second author was also supported by the Engineering and Physical Sciences Research Council Standard Grant number EP/M025365/1.}}
\author{
Franti\v{s}ek Kardo\v{s}\thanks{LaBRI, University of Bordeaux, France. E-mail: {\tt frantisek.kardos@labri.fr}.}\and
Daniel Kr\'al'\thanks{Mathematics Institute, DIMAP and Department of Computer Science, University of Warwick, Coventry CV4 7AL, UK. E-mail: {\tt d.kral@warwick.ac.uk}.}\and
Anita Liebenau\thanks{School of Mathematical Sciences, Monash University, 9 Rainforest Walk, Clayton 3800, Australia. E-mail: {\tt Anita.Liebenau@monash.edu}. This work was done while this author was affiliated with Department of Computer Science and DIMAP, University of Warwick, Coventry CV4 7AL, UK.}\and 
Luk\'a\v s Mach\thanks{Department of Computer Science and DIMAP, University of Warwick, Coventry CV4 7AL, UK. E-mail: {\tt lukas.mach@gmail.com}.}}
\date{}
\maketitle 

\begin{abstract} 
The model theory based notion of the first order convergence
unifies the notions of the left-convergence for dense structures and
the Benjamini-Schramm convergence for sparse structures.
It is known that every first order convergent sequence of
graphs with bounded tree-depth can be represented by an analytic limit object called a limit modeling.
We establish the matroid counterpart of this result:
every first order convergent sequence of matroids with bounded branch-depth representable over a fixed finite
field has a limit modeling, i.e., there exists an infinite matroid with the elements
forming a probability space that has asymptotically the same first order properties.
We show that neither of the bounded branch-depth assumption nor the representability assumption can be removed.
\end{abstract} 


\section{Introduction}

The theory of combinatorial limits keeps attracting a growing amount of attention.
Combinatorial limits have sparked many exciting developments in extremal combinatorics, in theoretical computer science, and other areas.
Their significance is also evidenced by a recent monograph of Lov\'asz~\cite{bib-lovasz-book}.
The better understood case of convergence of dense structures originated in the series of papers
by Borgs, Chayes, Lov\'asz, S\'os, Szegedy, and Vesztergombi~\cite{bib-borgs08+,bib-borgs+,bib-borgs06+,bib-lovasz06+,bib-lovasz10+}
on the dense graph convergence, and the notion was applied in various settings including
hypergraphs, partial orders, permutations, and tournaments~\cite{bib-elek12+,bib-hladky12+,bib-hoppen13+,bib-hoppen11+,bib-janson11,bib-kral13+}.
The convergence of sparse structures (such as graphs with bounded maximum degree) referred
to as the Benjamini-Schramm convergence~\cite{bib-aldous07+,bib-benjamini01+,bib-elek07,bib-hatami+}
is less understood despite having links to many problems of high importance.
For example, the conjecture of Aldous and Lyons~\cite{bib-aldous07+} on Benjamini-Schramm convergent sequences of graphs
is essentially equivalent to Gromov's question of whether all countable discrete groups are sofic. 
Other notions of convergence of sparse graphs were also proposed and studied 
\cite{br2011, bcg2013,bckl2013,bib-elek07,bib-hatami+}.

In the light of many results on the convergence of graphs,
one can ask whether a reasonable theory of matroid convergence can be developed. 
The first obstacle to building such a theory comes from the fact that matroids
when viewed as hypergraphs (e.g.~with edges being the bases)
can be too sparse for the classical dense convergence approach to be directly applied, and
too dense for the sparse convergence approach at the same time.
For example, the number of bases of the graphic matroid of $K_n$ is $n^{n-2}$,
an exponentially small fraction of all $(n-1)$-element subsets of the edge set of $K_n$ and
an even tinier fraction of all subsets of the edge set,
which rules out the dense convergence approach.
On the other hand, each element of this matroid is contained in a non-constant number of bases, and
it is impossible to follow the sparse convergence approach.
We overcome this obstacle by adapting the notion of the first order convergence to matroids.

The notion of the first order convergence was introduced by Ne\v set\v ril and Ossona de Mendez~\cite{bib-folim1,bib-folim2}
as an attempt to unify the convergence notions in the dense and sparse settings:
a sequence of structures of a fixed type (e.g.,~graphs) is {\em first order convergent}
if the probability that a random $k$-tuple of its elements has a first order property $\varphi$,
converges for every choice of $\varphi$ (a formal definition can be found in Subsection~\ref{subsect-FO}).
It holds that every first order convergent sequence of dense structures is convergent in the dense sense and
every first order convergent sequence of sparse structures is convergent in the Benjamini-Schramm sense.

In the analogy to graphons in the setting of dense graphs and graphings in the setting of sparse graphs,
an analytic limit object called a {\em limit modeling} was proposed in~\cite{bib-folim1,bib-folim2}
to represent asymptotic properties of first order convergent sequences.
Unlike in the dense and sparse graph settings,
it is not true that every first order convergent sequence of graphs has a limit modeling.
For example, the sequence of Erd\H os-R\'enyi random graphs $G_{n,p}$ for $p\in (0,1)$
is first order convergent with probability one but it has no limit modeling \cite[Lemma 18]{bib-folim2}. 
In the same paper, Ne\v set\v ril and Ossona de Mendez showed the following.
\begin{theorem}
\label{thm-treedepth}
Every first order convergent sequence of graphs with bounded tree-depth has a limit modeling.
\end{theorem}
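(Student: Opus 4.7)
The plan is to induct on the tree-depth bound $d$. For the base case $d=1$ the graphs are edgeless, and any atomless probability space without edges is a limit modeling. For the inductive step, fix a first order convergent sequence $(G_n)$ with $\td(G_n)\le d$, and for each $n$ fix a rooted forest $F_n$ of depth at most $d$ whose transitive closure contains $G_n$ as a subgraph. Deleting the roots of the components of $F_n$ yields graphs of tree-depth at most $d-1$, which feeds the induction.

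First I would use the fact that, for every fixed quantifier rank $q$ and every fixed tree-depth bound $d$, there are only finitely many FO-$q$-types of vertices together with their ancestor chains in $F_n$; this is a consequence of the very restricted quantifier structure enforced by bounded tree-depth. By a diagonal/compactness argument I would pass to a subsequence so that, for each such type $\tau$, the fraction of vertices of $G_n$ realising $\tau$ converges to some limit $p_\tau$.

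Next I would group the components of each $F_n$ by the isomorphism type of their rooted neighbourhoods. Each group yields a first order convergent sequence of smaller rooted graphs of tree-depth at most $d-1$, to which the induction hypothesis applies, producing limit modelings $L_\tau$. I would then assemble the global limit modeling $L$ as a weighted disjoint union of the $L_\tau$ (with a root re-attached to each copy and adjacencies reconstructed from $\tau$), where the weights are the limiting proportions $p_\tau$ from the first stage.

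Verification that $L$ has the correct FO statistics would rely on a Feferman--Vaught style composition theorem: the FO-$q$-type of a tuple in a disjoint union of rooted structures is determined by the multiset of FO-$q$-types of its projections onto the components. Combined with the convergence of the type proportions and the inductive correctness of the $L_\tau$, this yields the desired convergence. The main obstacle will be isolating the right notion of ``type relative to the \decomp'' so that both (i) only finitely many such types arise at each fixed quantifier rank and (ii) the composition theorem cleanly reproduces the global FO theory from the local ones. Since edges of $G_n$ may lie anywhere in the transitive closure of $F_n$, each element must be tracked by its precise ancestral position, and the interaction between distinct subtrees through shared ancestors must be handled with care; this is where the tree-depth bound (as opposed to, say, a tree-width bound) is truly used.
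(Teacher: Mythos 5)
The paper does not prove Theorem~\ref{thm-treedepth}: the result is quoted from Ne\v set\v ril and Ossona de Mendez~\cite{bib-folim2} and serves as the model for the paper's own contribution, Theorem~\ref{thm-branchdepth}. The paper's proof of the matroid analogue encodes the matroid together with a depth decomposition as a vertex-colored rooted forest of bounded depth, invokes Theorem~\ref{limForests} (Theorem~34 of~\cite{bib-folim2}) as a black box, and translates back via an interpretation scheme. For the graph case, the corresponding encoding colors each vertex of the tree-depth decomposition by its pattern of adjacencies to its ancestors, making the graph FO-bi-interpretable with the colored forest, after which Theorem~\ref{limForests} does all the work. Your plan is therefore a genuinely different route---a direct induction on depth that does not pass through the black-box colored-forest theorem.

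As written, however, the plan has gaps that are more than cosmetic. First, the claim that ``each group yields a first order convergent sequence of smaller rooted graphs'' is not substantiated: for a fixed type $\tau$ and a single $F_n$ you have \emph{many} components of type $\tau$, not a single graph, so there is no canonical object at index $n$ to which the inductive hypothesis applies. The natural candidate is the disjoint union of all type-$\tau$ components of $F_n$, but establishing that this auxiliary sequence is FO convergent requires an interpretation-scheme argument (restricting the relevant Stone pairings of $G_n$ to type-$\tau$ vertices), not merely a diagonal/compactness argument. Second, Feferman--Vaught for pure disjoint unions does not directly apply after root deletion: the subtrees are not merely disjoint, they share the removed root, and the edges of $G_n$ from a subtree vertex to that root vanish from the disjoint union. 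That adjacency information has to be carried by a vertex coloring (exactly the device behind the paper's encoding and behind Theorem~\ref{limForests}), and you then need a ``tree sum'' composition theorem over the colored language rather than plain Feferman--Vaught. Third, the ``weighted disjoint union with a root re-attached'' must be realised as a concrete standard Borel probability space with all FO-definable sets measurable, and one has to deal with types whose limiting frequency is zero but whose contribution does not vanish term-by-term. You flag the last of these yourself, but the first two are substantive steps that the proposal does not yet supply.
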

This result was extended to first convergent sequences of trees and
graphs of bounded path-width~\cite{bib-folim-ours,bib-folim3}.
Ne\v set\v ril and Ossona de Mendez~\cite{bib-folim4} have recently shown that
every first order convergent sequence of graphs from a nowhere-dense class of graphs has a limit modeling,
which is the most general result possible for monotone classes of graphs~\cite[Theorem 25]{bib-folim2}.

As a test that the approach to the matroid convergence based on the first order convergence is meaningful,
it seems natural to prove the analogue of Theorem~\ref{thm-treedepth},
which is actually one of our results (Theorem~\ref{thm-branchdepth}).
On the way towards Theorem~\ref{thm-branchdepth},
we need to find a matroid parameter that can play the role of the graph tree-depth.
We do so by introducing a parameter called branch-depth in Section~\ref{sect-branch-depth}. 
We believe that this matroid parameter is the right analogue of the graph tree-depth because it has the following properties,
which we establish in this paper.
We refer the reader to \cite[Chapter 6]{bib-nom2012} for a thorough discussion of the graph tree-depth.
\begin{itemize}
\item The branch-depth of a matroid corresponding to a graph $G$ is at most the tree-depth of $G$.
\item The branch-depth of a matroid corresponding to a graph $G$ with tree-depth $d$ is at least $\frac{1}{2}\log_2d$ if $G$ is $2$-connected.
\item The branch-depth is a minor monotone parameter (the same holds for graph tree-depth).
\item The branch-depth of a matroid is at most the square of the length of its longest circuit (recall that the tree-depth of a graph $G$ is at most the length of its longest path).
\item The branch-depth of a matroid is at least the binary logarithm of the length of its longest circuit (recall that the tree-depth of a graph $G$ is at least the binary logarithm of the length of its longest path).
\end{itemize}
In addition, there exists an efficient algorithm that given an integer $d$ and
an oracle-represented input matroid either outputs its decomposition of bounded depth or
it determines that the branch-depth of the input matroid exceeds $d$.

Equipped with the notion of branch-depth, we prove the following theorem in Section~\ref{sect-modeling}.
\begin{theorem}
\label{thm-branchdepth}
Every first order convergent sequence of matroids with bounded branch-depth that is representable over a fixed finite field has a limit matroid modeling.  
\end{theorem}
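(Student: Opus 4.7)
My plan is to adapt the proof strategy of Theorem~\ref{thm-treedepth} to the matroid setting, using the branch-depth decomposition in place of the tree-depth decomposition and exploiting the finiteness of the representing field to replace the finiteness of the graph signature. Fix a finite field $\F$ and a depth bound $d$, and assume the sequence consists of $\F$-representable matroids of branch-depth at most $d$. The first step is to associate to each matroid $\M_n$ in the sequence a rooted depth-$d$ tree $T_n$ (a \decomp) together with a decoration of each internal node by a ``local type'' that captures how the subtrees rooted at its children combine linearly. Bounded branch-depth bounds the rank of each local type, and $\F$-representability bounds the number of $\F$-representable matroids of any given rank; together these guarantee that the decorations come from a finite alphabet $\Sigma = \Sigma_{\F,d}$, and each leaf of $T_n$ corresponds to exactly one element of $\M_n$.

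The crux of the argument is a reduction of first order properties of tuples to combinatorial properties of their positions within the decorated tree. I would establish that for every first order formula $\phi(\vk)$ in the language of matroids, whether $\M_n \models \phi(\vk)$ depends only on the isomorphism type of the subtree of $T_n$ induced by the root-to-leaf paths of $\vk$, together with its $\Sigma$-decoration. This is proved by induction on the quantifier rank of $\phi$: the atomic (independence) predicates are read off directly from the local types along the paths, and the inductive step for an existential quantifier uses that a witness in $\M_n$ can be replaced by one whose decorated ancestor structure comes from a finite set of possibilities, because the bounded depth, the finite alphabet $\Sigma$, and the already fixed path structure together leave only finitely many essentially different extensions. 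This is a matroidal analogue of the Feferman--Vaught / tree-depth decomposition arguments used in the graph case~\cite{bib-folim1,bib-folim2}.

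With this in hand, the first order convergence of $(\M_n)$ transfers to the convergence, for each $k$ and each decorated pattern $P$, of the probability that a uniformly random $k$-tuple of elements of $\M_n$ realises $P$. Standard projective-limit arguments then produce a probability distribution on decorated depth-$d$ trees with a countable leaf set whose finite marginals match these limit probabilities. The limit modeling is constructed by taking the leaf set of a typical sample as the ground set, endowing it with the natural probability measure, and using the decorations to define independent sets on every finite subset of leaves via the associated local gadgets. Measurability of first order definable sets follows from the correspondence above, since each such set becomes a countable Boolean combination of cylinders in the finite-marginal $\sigma$-algebra.

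The main obstacle is the quantifier-elimination step. The existential induction requires a replacement argument that preserves the matroid-theoretic quantifier-free data of the tuple while modifying only finitely much of the decoration; in the graph tree-depth case this is comparatively easy because local types are essentially subsets of an ancestor chain, but for matroids one must argue using linear algebra over $\F$ that a witness with the ``wrong'' global type can always be swapped for one with a canonical type in the same linear orbit. This is precisely where the representability hypothesis is essential, since without it one loses the uniform bound on the number of local types and the induction breaks down. A secondary but more routine task is to verify that the putative limit structure really is a matroid, which amounts to showing that the exchange axiom passes to finite subsets of leaves from the gadget data along the spanning subtree; this should follow from the $\F$-representability of each gadget and the consistency built into the projective limit.
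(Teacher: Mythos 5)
Your high-level plan captures the right structural idea, and it is the same one the paper uses: extract from the branch-depth decomposition a rooted tree of bounded depth, decorate it with data from an alphabet whose finiteness comes jointly from the bounded depth and the finite field (in the paper, the color of a leaf is the $\F_q^D$-tuple of coordinates of the corresponding vector in the basis coming from its root-path), and then translate matroid first-order properties into first-order properties of the decorated tree. The independence relation $I_k(v_1,\dots,v_k)$ does indeed depend only on the colored subforest spanned by the root-to-leaf paths of $v_1,\dots,v_k$, which is the observation both you and the paper exploit.

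The step at which your proposal breaks is the claimed lemma that for \emph{every} first-order $\phi(\vk)$, the truth value $\M_n\models\phi(\vk)$ depends only on the isomorphism type of the decorated subtree induced by the root-to-leaf paths of $\vk$. This is false once quantifiers appear: already the formula $\exists x\,\neg I_2(v_1,x)$ (is there an element parallel to $v_1$?) cannot be read off from $v_1$'s path alone, and a sentence $\exists x\,\psi(x)$ is certainly not determined by the empty subtree. What is actually needed is not ``truth is local to the tuple'' but the full Feferman--Vaught/Hintikka-type machinery for bounded-depth trees: a finite family of types such that the Stone pairing $\langle\phi,\M_n\rangle$ is a finite rational function of the type densities, and the type densities converge. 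That machinery is precisely the content of the Ne\v set\v ril--Ossona de Mendez theorem on limit modelings of bounded-depth colored forests (Theorem~\ref{limForests}), and the paper does not reprove it — it constructs a basic interpretation scheme $\bI$ taking colored forests to matroids so that $\bI(F(M_n))\cong M_n$, invokes Theorem~\ref{limForests} to get a limit modeling $(\bF,\mathbf{c})$ of the forest sequence, and pushes this through $\bI$ via the transfer Lemma~\ref{aux671}. Your proposal re-derives this whole theory from scratch, with its hardest lemma misstated; reducing to the known theorem both fixes the gap and drastically shortens the argument. Your final remark about verifying the matroid axioms is also more work than necessary: since bounded branch-depth bounds the circuit length (Proposition~\ref{p.circuit}), the axioms (C1), (C2), (C3$'$) become first-order, hence are automatically inherited by the limit modeling, and the resulting matroid is finitary.
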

\noindent Note that matroids representable over finite fields have a structure more similar to graphs than general matroids and
it is not surprising that Theorem~\ref{thm-branchdepth} includes this assumption.
In fact, we show in Section~\ref{sect-non-exist} that neither the assumption on the bounded branch-depth
nor the assumption on the representability over a fixed finite field can be dropped. In particular,
we construct a first order convergent sequence of binary matroids that has no limit modeling, and
a first order convergent sequence of rank three matroids representable over rationals that has no limit modeling.

\section{Notation} 
In this section, we introduce the notation used throughout the paper.

\subsection{Finite matroids}\label{subs:finMatroids}
We start by introducing concepts related to finite matroids.
We refer to the monograph by Oxley \cite{o2011} for a more detailed treatment. 
In Subsection \ref{subs:InfMatroids}, we extend the terminology to infinite matroids.  
A {\em matroid} $M$ is a pair $(E, \I)$ where $E$ is a finite set, called the {\em ground set}, 
and $\I \subseteq 2^E$ is a collection of its subsets referred to as {\em independent sets}. 
The set $\I$ is required to be nonempty, to be hereditary (i.e., for every $F\in \I$, $\I$ must contain 
every subset of $F$), and to satisfy {\em the augmentation axiom}: 
if $F$ and $F'$ are independent sets with $|F| < |F'|$, 
then there exists $x \in F'\setminus F$ such that $F \cup \{x\} \in \I$.
We abuse the notation and we often denote by $M$ the ground set of the matroid $M$. 

A subset $F\se M$ is called {\em dependent} if $F\not\in \I$, 
and a minimal dependent set is a {\em circuit}. 
The number of elements of a circuit is referred to as its {\em length}.
It is well-known that the collection $\cC$ of circuits of a matroid $M$ satisfies the following properties. 
\begin{itemize}
\item[(C1)] $\emptyset \not\in \cC$.
\item[(C2)] If $C_1,C_2 \in \cC$ and $C_1\se C_2$, then $C_1=C_2$. 
\item[(C3)] If $C_1,C_2 \in \cC$ with $e\in C_1\cap C_2$ and $f\in C_1\setminus C_2$,  
	then there exists a circuit $C_3\in \cC$ such that $f\in C_3 \se (C_1\cup C_2)\setminus \{e\}$. 
\end{itemize}
Furthermore, (C1)--(C3) form an alternative set of axioms to define matroids. More precisely, 
a collection $\cC$ of subsets of $M$ is the collection of circuits of a matroid if and only if 
it satisfies (C1)--(C3).
The {\em rank} $r_M(F)$ of a set $F \subseteq M$ is the size of the largest independent subset of $F$. 
The {\em rank of a matroid} $r_M(M)$ is the rank of the ground set of $M$.
It is well-known that the rank function of a matroid $M$ is submodular, i.e.,~for any 
two subsets $F_1,F_2\se M$ it holds that $r_M(F_1\cup F_2)+r_M(F_1\cap F_2)\leq r_M(F_1)+r_M(F_2)$. 
When there is no danger of confusion, we omit the subscript, i.e., we just use $r(F)$ instead of $r_M(F)$.

There are two particular important examples of matroids. 
A {\em graphic matroid} $M(G)$ is obtained from a graph $G$ in the following way:  
the elements of $M(G)$ are the edges of $G$, and a set of edges is independent if it is acyclic. 
{\em Vector matroids} have a set of vectors of a vector space as their ground set, 
and a set of elements is independent if  they are linearly independent. 

A matroid $M$ is called {\em representable over a field $\F$} if  
there exists a function $f$ that maps the elements of $M$  to vectors over $\F$ such that 
$F\se M$ is independent in $M$ if and only if $f(F)$ is linearly independent.  
A matroid is {\em binary} iff it is representable over the binary field $\F_2$. 

A {\em loop} is an element $e$ of $M$ with $r(\{e\}) = 0$, 
and a {\em bridge} is an element such that $r(M \backslash e) = r(M) - 1$.
Two elements $e$ and $e'$ are {\em parallel} if neither of them is a loop and $r(\{e,e'\})=1$.
If $F$ is a subset of elements of $M$, the {\em closure of $F$} is defined as 
$\cl(F) := \big\{x : r(F \cup \{ x \}) = r(F) \big\}.$ 
Clearly, $r(\cl(F))=r(F)$.

If $F$ is a subset of the elements of $M$, then $M\backslash F$ is the matroid 
obtained from $M$ by {\em deleting} the elements of $F$, i.e., the elements of 
$M\backslash F$ are those not contained in $F$, and a set $F'$ is independent 
in $M\backslash F$ iff it is independent in $M$. 
The matroid $M/F$ is obtained by {\em contracting} $F$: the elements of $M/F$ 
are those not contained in $F$, and a subset $F'$ of such elements is independent in 
$M/F$ iff $F'$ is independent in $M$ and $r(F\cup F')=r(F)+r(F')$. 
When $F=\{e\}$ is a single element, we write $M\backslash e$ and $M/e$ instead of 
$M\backslash \{e\}$ and $M/\{e\}$. 
The restriction $M|F$ of a matroid $M$ to $F$ is the matroid 
$M\setminus \overline{F}$, where $\overline{F}$ denotes the complement of $F$ in $M$.
Finally, a {\em minor} of a matroid $M$ is
a matroid obtained by a sequence of deleting and contracting some of its elements.
It is not hard to show that if a graph $G'$ is a minor of a graph $G$,
then the matroid $M(G')$ is a minor of the matroid $M(G)$.

A matroid $M$ is {\em connected} if the only two subsets $F\se M$ satisfying $r(F)+r(\overline{F}) = r(M)$ 
are the empty set and the whole ground set.
A {\em component of $M$} is a set $F$ that is an inclusion-wise maximal subset such that $M|F$ is connected.
The components of $M$ are equivalence classes given by the binary relation that
represents that two elements of $M$ are contained in a common circuit.
Hence, any two components of a matroid $M$ are disjoint.
If $M$ is a matroid and $M_1,\ldots,M_k$ are its components,
then $r_M(X)=r_{M_1}(X\cap M_1)+\cdots+r_{M_k}(X\cap M_k)$.

\subsection{Matroid algorithms}
Algorithms for matroids have been studied extensively, and we want to review selected important facts here. 
It is common (see, e.g.,~\cite{c.gaven, rw1980, c.seymour}) to assume that the input matroid 
is presented by means of an independence oracle. That is, we assume that we can determine whether any 
subset of the elements of the given matroid is independent using a black-box function in unit time. 
The complexity of algorithms for matroids is measured in terms of the number of elements of the input matroid. 

There is an efficient algorithm \cite{t1960} to test whether a given binary matroid is graphic, 
and if so to find a suitable graph. However, in general, deciding if an oracle-given matroid is 
binary cannot be solved in subexponential time \cite{c.seymour}. In the same paper \cite{c.seymour}, 
Seymour presents a polynomial-time algorithm to decide whether an oracle-given matroid 
is graphic. 

A lot of decision problems for matroids involve the structural matroid parameter {\em branch-width}. 
Rather than giving the exact definition here, let us just say that matroid branch-width is the analogue of 
graph tree-width.
Oum and Seymour \cite{os2007} showed, improving \cite{h2005}, that 
for every fixed $k\geq 1$, it can be decided in polynomial time whether the branch-width of an 
oracle-given matroid is at most $k$, and
that an optimal branch-decomposition can be constructed (for such matroids). 

If $\cM$ is a class of matroids that are representable over a fixed finite field and
that have branch-width bounded by a constant,
then properties expressible in monadic second order over $\cM$ can be decided in cubic time 
\cite{c.hlineny}. In \cite{c.gaven}, Gaven\v{c}iak, Oum, and the second author 
 introduce the notion of locally bounded branch-width and present a fixed parameter algorithm 
 to decide first order properties on the class of regular matroids with locally bounded 
branch-width. 

Hlin\v{e}n\'y \cite{h2006-2} also showed that for
every field $\F$ of order at least four, it is NP-hard to decide whether a matroid given by its 
rational representation
is $\F$-representable. The result still holds when restricting the input to matroids of branch-width at most three.  
On the other hand, for every $k\geq 1$ and any two finite fields $\F$ and $\F'$, 
there is a polynomial-time algorithm that decides whether a given $\F$-representable matroid of branch-width 
at most $k$ is also $\F'$-representable \cite{k2007}.  

Similarly to the relation between the tree-depth and tree-width,
the branch-width of a matroid $M$ is upper bounded by the branch-depth of $M$ up to an additive constant.
It is natural to ask whether Theorem~\ref{thm-branchdepth} can be extended to sequences of matroids
representable over finite fields that have bounded branch-width;
we believe that this is likely to be the case but it might be challenging to prove
since the analogous statement for sequences of graphs has been proven only very recently~\cite{bib-folim4}.

\subsection{Infinite matroids}
\label{subs:InfMatroids}
One of the ways to define the notion of infinite matroids is to require the augmentation 
axiom to hold for finite subsets and to additionally require that an infinite set $F\se E$ is 
independent if and only if all of its finite subsets are independent.
Such matroids are called {\em finitary}.  
The drawback of this definition is that finitary matroids can have only finite circuits. 

A robust notion of infinite matroids was proposed by Bruhn et~al.~\cite{bdkpw2013}.
They developed five equivalent axiom systems that characterize (infinite) matroids through independent sets, bases, 
the closure operator, circuits, and their rank function.
We present the characterization through circuits here. 
Let $M$ be a set, and let $\cC\se 2^{M}$ be a collection of subsets. 
Further, let  $\I=\I(\cC)$ denote the {\em $\cC$-independent sets}, that is the collection of sets $I\se M$ 
such that $C\nsubseteq I$ for all $C\in \cC$. 
A family $\cC$ is the collection of circuits of a matroid
if it satisfies (C1), (C2), and the following two conditions.  
\begin{itemize}
\item[(C3')] Whenever $X\se C \in \cC$ and $\{C_e\, | \, e\in X\}$ is a family of elements of $\cC$ such that 
	$e\in C_{f} \Leftrightarrow e=f$ for all $e,f \in X$, then for every $f \in C\setminus \left(\bigcup_{e\in X}  C_e\right)$ 
	there exists $C'\in \cC$ such that $f\in C' \se \left(C\cup \bigcup_{e\in X}  C_e\right)\setminus X$. 
\item[(CM)] Whenever $I\se F \se E$ and $I\in\I$, the set $\{I'\in\I\ :\ I\se I' \se F \}$ has a maximal element. 
\end{itemize} 
If $|X|=1$, the axiom (C3') becomes the usual {\em strong circuit elimination axiom} (C3).  
We remark that all finitary matroids are matroids in the sense just defined.

\subsection{First order convergence}
\label{subsect-FO}

For a set $\lambda$ of relational symbols, let $\FOl$ denote the set of first order formulas 
using symbols from $\lambda$, and let $\FOkl\se\FOl$ denote the set of all such formulas $\phi$ with $k$ free variables. 
A {\em $\lambda$-modeling} (or just a {\em modeling} if $\lambda$ is clear from the context) $A$
is a (finite or infinite) $\lambda$-structure 
whose domain is a standard Borel space equipped with a probability measure $\nu$ such that the following holds:
for every $\varphi\in\FOkl$,
the subset $A^\varphi\subseteq A^k$ formed by all $k$-tuples of the elements of $A$ satisfying $\varphi$
is measurable with respect to the product measure $\nu^k$.

For a formula $\phi \in \FOkl$ and a modeling $A$, 
the {\em Stone pairing} $\langle\phi,A\rangle$ is $\nu^k(A^\varphi)$,
i.e., the Stone pairing is the probability that a randomly chosen $k$-tuple of the elements of $A$ satisfies $\phi$. 
When a finite $\lambda$-structure $A$ with $|A|$ elements is viewed as a modeling with a uniform discrete probability measure,
it holds that
$$\langle\phi, A\rangle = \frac{A^\varphi}{|A|^k} = \frac{\big|\{(\vk)\in A^k: A \models \phi[\vk]\}\big|}{|A|^k}.$$ 
A sequence $(A_n)_{n\in \N}$ of finite $\lambda$-structures is {\em first order convergent} if the sequence 
$\langle\phi,A_n\rangle$ converges for every first order formula $\phi\in\FOl$. 
A $\lambda$-modeling $A$ is a {\em limit modeling} of a 
first order convergent sequence $(A_n)_{n\in\N}$ if 
$$ \langle\phi,A \rangle = \lim_{n \to \infty} \langle\phi,A_n\rangle$$ 
for every formula $\phi\in\FOl$. 

In Sections~\ref{sect-branch-depth} and~\ref{sect-modeling},
we work with rooted trees, rooted forests and their mode\-lings,
which were studied in \cite[Part 3]{bib-folim2}.
A {\em rooted tree} is a tree with a distinguished vertex referred to as the root, and
a {\em rooted forest} is a graph such that each of its component is a rooted tree.
The {\em depth} of a rooted tree is the length of the longest path from the root to a leaf, and
the depth of a rooted forest is the maximum depth of a rooted tree contained in it.
Rooted forests can be described by a language with a single binary relation representing the parent-child relation.
In addition, we also consider rooted forests with vertices colored with one of a bounded number of colors.
The vertex coloring of a rooted forest that uses $k$ colors
can be described by extending the language with $k$ unary relations, each representing one of the colors.
We use the following~\cite[Theorem 34]{bib-folim2} to prove one of our main results. 
\begin{theorem}\label{limForests} 
Let $k$ and $d$ be fixed integers.
Every first order convergent sequence $(F_n)_{n\in\N}$ of rooted forests with depth at most $d$ and
with vertices colored with at most $k$ colors has a limit modeling. 
\end{theorem}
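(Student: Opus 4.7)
The plan is to proceed by induction on the depth $d$. For the base case $d=0$, each $F_n$ is a disjoint union of colored isolated roots; the first-order type of a $k$-tuple is determined by colors and equalities alone, and a limit modeling is easily produced by taking $[0,1]$ with Lebesgue measure partitioned into $k$ Borel cells of measures $\lim_{n\to\infty}\langle C_i(x),F_n\rangle$, which exist by the convergence hypothesis.

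For the inductive step, the central technical step is a quantifier-elimination-style claim: for each quantifier rank $q$, the first-order $q$-type of a tuple $(v_1,\ldots,v_k)$ in a depth-$d$ colored rooted forest is determined by a bounded amount of local data, namely the colors of the $v_i$, the combinatorial structure of their ancestor chains (including the pattern of lowest common ancestors), and the multiset of quantifier-rank-$(q-O(1))$ types of the maximal subforests hanging off these chains but disjoint from $\{v_1,\ldots,v_k\}$. I would prove this by an Ehrenfeucht--Fra\"iss\'e argument: duplicator's strategy in $q$ rounds on two depth-$d$ forests can be assembled from independent strategies on finitely many depth-$(d-1)$ subforests, absorbing only a bounded number of rounds per level. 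A consequence is that only finitely many $q$-types are realized across the entire class, so the convergence hypothesis implies that for each $q$-type $\tau$ the frequency $\langle\tau(x),F_n\rangle$ converges to some value $\alpha_\tau$.

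Armed with this, I would construct the limit modeling $A$ recursively on the ground set $[0,1]^d$ equipped with product Lebesgue measure: the first coordinate of a point encodes the root of the component containing it, with the roots partitioned according to the limiting distribution of root types; given the root type, the second coordinate encodes a child, distributed according to the limiting conditional type distribution obtained by applying the inductive hypothesis to the sequence of depth-$(d-1)$ child-subforests; and so on down to depth $d$. The parent-child relation is then Borel on $[0,1]^d$, and measurable selectors at each level ensure that the entire construction is Borel-measurable.

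The main obstacle I expect is matching Stone pairings: one must verify $\langle\varphi,A\rangle=\lim_{n\to\infty}\langle\varphi,F_n\rangle$ for \emph{every} first-order $\varphi$, not merely for the finitely many fixed-rank types tracked in the construction. This is where the quantifier-elimination claim above is invoked: on depth-$d$ forests every $\varphi$ is equivalent to a Boolean combination of atomic statements about types of bounded quantifier rank, and the modeling is set up so that these type frequencies are precisely the correct limits. The delicate part is the coherent treatment of quantifier ranks across the recursion (since descending to child-subforests decreases depth but may force a larger quantifier rank to be tracked at the lower level), which requires choosing the right inductive invariant and propagating it carefully through each application of the inductive hypothesis.
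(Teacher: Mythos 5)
The paper does not actually prove this statement: it is quoted (up to signature) as Theorem~34 of Ne\v set\v ril and Ossona de Mendez, \emph{A unified approach to structural limits, and limits of graphs with bounded tree-depth}, and the only argument the paper supplies is that the two natural languages for colored rooted forests (a directed parent relation, versus a symmetric edge relation together with a root-marking predicate) are each basically interpretable in the other, so that first-order convergence and limit modelings transfer between the two formulations by Propositions~3 and~4 of the cited work. You are therefore reproving a result that the paper takes as a black box, and the right comparison is with that reference rather than with anything in this paper.

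Measured against that reference, your outline has the right shape --- an Ehrenfeucht--Fra\"iss\'e reduction showing there are only finitely many types on bounded-depth colored forests, followed by a Borel construction that apportions measure according to limiting type frequencies --- but three points must be repaired before it is a proof. First, the invariant cannot be ``the multiset of quantifier-rank-$(q-O(1))$ types of the pendant subforests'': a $q$-round game cannot distinguish arbitrary multiplicities, so the correct datum is, for each such type, its multiplicity \emph{truncated at an explicit threshold} depending on $q$ and $d$. That truncation is exactly what makes the set of $q$-types finite, and without stating it the ``only finitely many $q$-types'' claim does not follow from your EF argument. Second, a single Borel space must realize the limiting $q$-type frequencies for \emph{all} $q$ simultaneously; one Lebesgue space suffices, but only after taking the common refinement of an increasing chain of finite Borel partitions indexed by quantifier rank, and this has to be built into the construction rather than left implicit (your $[0,1]^d$ picture fixes one partition per level up front). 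Third, and most seriously: the limiting elementary theory is determined by the convergence, and it may assert the existence of a vertex of some type $\tau$ whose frequency in $F_n$ tends to $0$. A modeling built purely by distributing measure proportionally to frequencies would give $\tau$ measure zero and, as you have sketched it, would simply contain no $\tau$-vertex at all, so that $\langle\sigma,A\rangle = 0$ instead of the correct value $1$ for the sentence $\sigma$ asserting the existence of such a vertex. One must graft null sets realizing every cofinally-occurring type onto the probability space and then recheck that all definable relations remain measurable on the enlarged domain. This residual-type issue is where the cited proof spends most of its technical effort, and it is the ingredient your plan omits.
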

Theorem~\ref{limForests} is proven in~\cite{bib-folim2}
for rooted forests described by a language
with a single symmetric binary relation representing edges and
a single unary relation distinguishing roots of trees in the forest.
Since there is a basic interpretation scheme translating the description of rooted forests
from this language to the language that we consider here and
there is also a basic interpretation scheme in the other direction (see the next subsection for a definition if needed),
Theorem~34 from~\cite{bib-folim2} and Theorem~\ref{limForests} are equivalent by \cite[Propositions 3 and 4]{bib-folim2}.

Our paper concerns matroids and their modelings;
we now introduce the notation related to the first order convergence matroids and matroid modelings.
Let $\lambda_M$ be the countable language containing a $k$-ary relation $I_k$ for every positive integer $k$.
The relation $I_k$ is formed by all $k$-tuples of elements that are independent in the matroid.
Finite matroids can be axiomatized in the first order language by a countable set of $\lambda_M$-formulas.
However, in general, the axioms (C1), (C2), (C3') and (CM) cannot be replaced by a countable 
set of first order axioms. 

Let $(M_n)_{n\in \N}$ be a sequence of finite matroids, equipped with the uniform measure on its element sets. 
We define $(M_n)_{n\in \N}$ to be first order convergent
if the sequence of the Stone pairings $\langle\phi,M_n\rangle$ converges for every first order $\lambda_M$-formula $\phi$.
A $\lambda_M$-modeling $\M$ is a {\em limit modeling} of $(M_n)_{n\in \N}$
if it is an infinite matroid and 
$$ \langle\phi,\M \rangle = \lim_{n \to \infty} \langle\phi,M_n\rangle$$ 
for every first order $\lambda_M$-formula $\phi$.
Note that this definition is stronger than that of a limit modeling because
we require additionally that the limit modeling is an infinite matroid. 
Note that if there exists an integer $K$ such that every circuit of $M_n$ has length at most $K$,
then every circuit of the limit modeling $\M$ has length at most $K$.
In particular, $\M$ is finitary. 

\subsection{Interpretation schemes}

Let $\kappa, \lambda$ be signatures, where $\lambda$ has $q$ relational symbols 
$R_1,\ldots,R_q$ with respective arities $r_1,\ldots,r_q$. An {\em interpretation scheme} $\bI$ 
of $\lambda$-structures in $\kappa$-structures is defined by 
an integer $k$, which is called the {\em exponent} of the interpretation scheme, 
a formula $\theta_E\in \mathrm{FO}_{2k}(\kappa)$, 
a formula $\theta_0\in \mathrm{FO}_k(\kappa)$, 
and a formula $\theta_i\in \mathrm{FO}_{r_ik}(\kappa)$ for each symbol $R_i\in \lambda$, such that: 
\begin{itemize} 
\item the formula $\theta_E$ defines an equivalence relation on $k$-tuples;
\item each formula $\theta_i$ is compatible with $\theta_E$, in the sense that for every $0\leq i \leq q$ it holds 
	$$ \bigwedge_{1\leq j\leq r_i} \theta_E({\bf x}_j,{\bf y}_j)\quad \vdash \quad \theta_i({\bf x}_1,\ldots,{\bf x}_{r_i}) \leftrightarrow \theta_i({\bf y}_1,\ldots,{\bf y}_{r_i}), $$ 
	where $r_0=1$, ${\bf x}_j$ and ${\bf y}_j$ represent $k$-tuples of free variables, and
	$\theta_i({\bf x}_1,\ldots,{\bf x}_{r_i})$ stands for $\theta_i(x_{1,1},\ldots,x_{1,k},\ldots, x_{r_i,1},\ldots, x_{r_i,k})$.
\end{itemize}
For a $\kappa$-structure $A$, we denote by $\bI(A)$ the $\lambda$-structure $B$ defined as follows:
\begin{itemize}
\item the domain of $B$ is the subset of the $\theta_E$-equivalence classes $[{\bf x}]\se A^k$ of the tuples ${\bf x}=(x_1,\ldots,x_k)$ 
	such that $A\models \theta_0({\bf x})$;
\item for each $1\leq i \leq q$ and every ${\bf v}_1,\ldots,{\bf v}_{r_i}\in A^{r_i\times k}$ such that $A\models \theta_0({\bf v}_j)$ 
	for every $1\leq j\leq r_i$ it holds 
	$$ B \models R_i ([{\bf v}_1],\ldots,[{\bf v}_{r_i}]) \quad \Longleftrightarrow \quad A\models \theta_i({\bf v}_1,\ldots,{\bf v}_{r_i}).$$
\end{itemize}
If $\theta_0$ is a tautology and $\theta_E$ is the equality on $k$-tuples,
then the interpretation scheme is said to be {\em basic}.

The following is a standard result.
\begin{proposition}\label{aux089}
Let $\bI$ be an interpretation scheme of $\lambda$-structures in $\kappa$-structures. 
Then there is a mapping $\tilde{\bI}:\mathrm{FO}(\lambda) \to \mathrm{FO}(\kappa)$ 
(defined by means of the formulas $\theta_E,\theta_0,\ldots,\theta_q$ above) such that for every $\phi\in\mathrm{FO}_p(\lambda)$, 
and every $\kappa$-structure $A$, the following property holds. 
For every $[{\bf v}_1],\ldots,[{\bf v}_{p}] \in \bI(A)^p$ (where ${\bf v}_i=(v_{i,1},\ldots,v_{i,k})\in A^k$) it holds 
$$ \bI(A) \models \phi ([{\bf v}_1],\ldots,[{\bf v}_{p}]) \quad \Longleftrightarrow \quad A\models \tilde{\bI}(\phi)({\bf v}_1,\ldots,{\bf v}_{p}).$$
\end{proposition}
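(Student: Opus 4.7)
The plan is to construct $\tilde{\bI}$ by recursion on the structure of $\phi\in\FOl$, and verify the biconditional by structural induction on $\phi$. Throughout, every first-order variable $x$ of a $\lambda$-formula will be replaced by a $k$-tuple of $\kappa$-variables ${\bf x}=(x_1,\ldots,x_k)$, reflecting the fact that elements of $\bI(A)$ are $\theta_E$-equivalence classes of $k$-tuples of elements of $A$.

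For the base cases, I would set $\tilde{\bI}(x=y):=\theta_E({\bf x},{\bf y})$, and, for each relational symbol $R_i$ of $\lambda$, $\tilde{\bI}(R_i(x_1,\ldots,x_{r_i})):=\theta_i({\bf x}_1,\ldots,{\bf x}_{r_i})$. For the inductive step, I would put $\tilde{\bI}(\neg\phi):=\neg\tilde{\bI}(\phi)$, $\tilde{\bI}(\phi\wedge\psi):=\tilde{\bI}(\phi)\wedge\tilde{\bI}(\psi)$, and, crucially for quantifiers,
\[
\tilde{\bI}(\exists x\,\phi):=\exists x_1\cdots\exists x_k\,\bigl(\theta_0({\bf x})\wedge\tilde{\bI}(\phi)\bigr),
\]
with the analogous definition for $\forall$. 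The relativisation to $\theta_0$ is needed because quantification in $\bI(A)$ only ranges over those equivalence classes whose representatives satisfy $\theta_0$.

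The verification of the biconditional proceeds by induction on $\phi$. The atomic case is immediate from the definition of $\bI(A)$ and from the translation of equality via $\theta_E$. The Boolean connectives are trivial. For the quantifier case, one uses that $[{\bf v}]\in\bI(A)$ exactly when $A\models\theta_0({\bf v})$, which matches the relativisation above. The one subtlety is that the inductive hypothesis provides the biconditional only up to choice of representatives of equivalence classes, so one must check that the truth of $\tilde{\bI}(\phi)$ does not depend on the representatives chosen; this is exactly the content of the compatibility condition between $\theta_E$ and each $\theta_i$, and it is preserved under the recursive construction because Boolean combinations and $\theta_0$-relativised quantification of $\theta_E$-invariant formulas are again $\theta_E$-invariant (a short auxiliary induction).

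The main technical point, and the only part where care is required, is therefore the quantifier step together with this $\theta_E$-invariance bookkeeping; once it is handled, everything else is a mechanical structural induction, which is why the statement is classified as standard. No additional assumption on $\kappa$, $\lambda$, or $A$ is required, and the construction of $\tilde{\bI}$ depends only on $\theta_E,\theta_0,\theta_1,\ldots,\theta_q$, as claimed.
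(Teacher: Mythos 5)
The paper does not prove Proposition~\ref{aux089}: it is explicitly labelled ``a standard result'' and used as a black box, so there is no proof in the paper to compare against. Your argument---defining $\tilde{\bI}$ by structural recursion, sending equality to $\theta_E$ and each relation $R_i$ to $\theta_i$, handling Boolean connectives compositionally, relativising quantifiers to $\theta_0$, and running the auxiliary induction showing that each $\tilde{\bI}(\phi)$ is $\theta_E$-invariant so that the choice of representatives does not matter---is precisely the standard proof of this fact and is correct.
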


We need the following generalization of Propositions 3 and 4 from~\cite{bib-folim2}.
\begin{lemma}\label{aux671}
Let $\bI$ be an interpretation scheme of $\lambda$-structures in $\kappa$-structures, 
let $\tilde{\bI}$ be the mapping from Proposition \ref{aux089}, 
and let $(A_n)_{n\in \N}$ be a sequence of finite $\kappa$-structures such that 
\begin{itemize}
\item[(1)] $\lim\limits_{n\to\infty} \langle \theta_0,A_n\rangle = 1$, and
\item[(2)] $\lim\limits_{n\to\infty} \langle \phi, \bI(A_n)\rangle = \lim\limits_{n\to\infty} \langle \tilde{\bI}(\phi), A_n\rangle$
           for every $\phi\in\FOl$.
\end{itemize}
If the sequence $(A_n)_{n\in \N}$ is first order convergent,
then the sequence $(\bI(A_n))_{n\in \N}$ is also first order convergent.
Moreover, if ${\bf A}$ is a limit modeling of $(A_n)_{n\in \N}$,
then $\bI({\bf A})$ is a limit modeling of $(\bI(A_n))_{n\in \N}$.
\end{lemma}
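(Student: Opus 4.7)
The first-order convergence of $(\bI(A_n))_{n\in\N}$ is essentially a direct consequence of hypothesis~(2). Since $(A_n)$ is first-order convergent and $\tilde{\bI}(\phi) \in \mathrm{FO}(\kappa)$ for every $\phi\in\FOl$, the limit $\lim_n \langle \tilde{\bI}(\phi), A_n \rangle$ exists. Hypothesis (2) then yields that $\lim_n \langle \phi, \bI(A_n)\rangle$ exists as well, so $(\bI(A_n))$ is first-order convergent. I would present this as the first paragraph of the proof.

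The main work concerns showing that $\bI({\bf A})$ is a limit modeling. Here I would proceed as follows. First, observe that because ${\bf A}$ is a limit modeling, $\langle \theta_0,{\bf A}\rangle = \lim_n \langle \theta_0,A_n\rangle = 1$, so the set $T := \{{\bf x}\in {\bf A}^k : {\bf A}\models \theta_0({\bf x})\}$ has full measure in the standard Borel product space $({\bf A}^k,\nu^k)$. Next, equip the domain $D$ of $\bI({\bf A})$ (the set of $\theta_E$-classes of tuples in $T$) with the pushforward of $\nu^k|_T$ under the quotient map $q : T \to D$; by standard descriptive set-theoretic arguments, since $\theta_E$ is a Borel equivalence relation definable by a first-order formula over a standard Borel space, $D$ inherits a standard Borel structure and $q_*(\nu^k|_T)$ is a Borel probability measure on it. Measurability of the interpreted relations $R_i$ on $\bI({\bf A})$ follows because their preimages under $q^{r_i}$ are precisely the sets defined by $\theta_i$, which are measurable in ${\bf A}^{r_ik}$ because ${\bf A}$ is a modeling.

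Having established that $\bI({\bf A})$ is a $\lambda$-modeling, the calculation of Stone pairings is straightforward via Proposition~\ref{aux089}. For each $\phi\in\mathrm{FO}_p(\lambda)$, sampling $p$ independent elements of $\bI({\bf A})$ amounts (by definition of the pushforward measure) to sampling $p$ independent $k$-tuples ${\bf v}_1,\ldots,{\bf v}_p$ from $({\bf A}^k,\nu^k)$ and taking their $\theta_E$-classes; here I use Fubini to identify the product of pushforwards with the pushforward of the product. Proposition~\ref{aux089} then gives
$$ \langle \phi, \bI({\bf A})\rangle = \nu^{pk}\bigl(\{({\bf v}_1,\ldots,{\bf v}_p) : {\bf A}\models \tilde{\bI}(\phi)({\bf v}_1,\ldots,{\bf v}_p)\}\bigr) = \langle \tilde{\bI}(\phi),{\bf A}\rangle. $$
Chaining with the limit modeling property of ${\bf A}$ and hypothesis~(2) yields
$$ \langle \phi, \bI({\bf A})\rangle = \langle \tilde{\bI}(\phi),{\bf A}\rangle = \lim_{n\to\infty}\langle \tilde{\bI}(\phi),A_n\rangle = \lim_{n\to\infty}\langle \phi,\bI(A_n)\rangle, $$
which is exactly what is needed for $\bI({\bf A})$ to be a limit modeling of $(\bI(A_n))$.

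The main obstacle I anticipate is the modeling-theoretic bookkeeping in the second paragraph: one must argue carefully that the quotient by $\theta_E$ yields a standard Borel space with a well-defined probability measure and that the interpreted relations remain measurable. Most of the rest is a symbolic manipulation that unwinds the definitions; the delicate point specific to this generalization (compared to the basic interpretation schemes treated in Propositions 3 and 4 of~\cite{bib-folim2}) is precisely that $\theta_E$ may be a non-trivial equivalence relation and $\theta_0$ may be non-tautological, and this is exactly what hypothesis~(1) is designed to handle by making the $\theta_0$-cut have measure one in the limit.
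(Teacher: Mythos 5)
Your proof is correct and follows essentially the same strategy as the paper's: push the measure on ${\bf A}^k$ forward along the quotient map to the domain of $\bI({\bf A})$, use hypothesis (1) to guarantee the pushforward has total mass one, and then apply Proposition~\ref{aux089} to transfer Stone pairings and conclude via hypothesis (2). Two minor stylistic divergences are worth noting. First, you derive first-order convergence of $(\bI(A_n))$ directly from hypothesis (2) together with the convergence of $(A_n)$, whereas the paper simply invokes Proposition~3 of~\cite{bib-folim2}; your route is self-contained and equally valid. Second, you explicitly flag the descriptive set-theoretic point that the quotient of ${\bf A}^k$ by $\theta_E$ must again be a standard Borel space, asserting that this follows ``by standard arguments.'' Be aware that this is not automatic for arbitrary Borel equivalence relations (one needs smoothness, e.g.\ a Borel transversal); the paper glosses over this as well, simply defining the $\sigma$-algebra $\Sigma_{\bf B}$ and measure $\nu_{\bf B}$ by pullback without verifying standardness, so you are at the same level of rigor as the published argument, but the claim should not be presented as a routine fact.
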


\begin{proof} 
The sequence $(\bI(A_n))_{n\in \N}$ is first order convergent by \cite[Proposition 3]{bib-folim2}. 
Let $\Sigma_{\bf A}$ be the underlying $\sigma$-algebra of the probability space of ${\bf A}$.  
We define the $\sigma$-algebra on ${\bf B}:=\bI({\bf A})$ as follows:
$$ X\in \Sigma_{\bf B} \quad \text{ if and only if } \quad \bigcup_{x\in X} [x]\in\Sigma_{\bf A}.$$
The probability measure $\nu_{\bf B}$ on ${\bf B}$ is defined as
$\nu_{\bf B}(X):= \nu_{\bf A}(\bigcup_{x\in X} [x])$ for $X\in \Sigma_{\bf B}$, where $\nu_{\bf A}$ is 
the probability measure on ${\bf A}$. 
The condition (1) implies that $\nu_{\bf B}$ is indeed a probability measure, i.e.,~$\nu_{\bf B}({\bf B})=1$.

We now argue that $\bI({\bf A})$ is a limit modeling of the sequence $(\bI(A_n))_{n\in \N}$.  
Let $\phi \in \FOkl$. 
Proposition~\ref{aux089} yields that $(x_1,\ldots,x_k)\in\bI({\bf A})^\varphi$ iff
$([x_1],\ldots,[x_k])\in {\bf A}^{\tilde{\bI}(\phi)}$.
It follows from the definition of the $\sigma$-algebra $\Sigma_{\bf B}$ that $\bI({\bf A})^\varphi$ is measurable.
The definition of $\nu_{\bf B}$ yields that that 
$$ \langle \phi, \bI({\bf A})\rangle=\langle \tilde{\bI}(\phi), {\bf A}\rangle,$$ 
which combines with the condition (2) to the following:
$$ \langle \phi, \bI({\bf A})\rangle=\langle \tilde{\bI}(\phi), {\bf A}\rangle =
  \lim_{n\to\infty} \langle \tilde{\bI}(\phi), A_n\rangle=\lim_{n\to\infty} \langle \phi, \bI(A_n)\rangle.$$
Hence, $\bI({\bf A})$ is a limit modeling of the sequence $(\bI(A_n))_{n\in \N}$.
\qedhere 
\end{proof}

\section{Matroid branch-depth} 
\label{sect-branch-depth} 

In this section, we introduce a matroid parameter analogous to the graph tree-depth.  
We also present an algorithm that efficiently computes an approximate value of the parameter
of an input matroid together with the certifying depth-decomposition. 

\subsection{Definition and basic properties} 

The branch-depth of a matroid is equal to the optimal height of a certain kind of a decomposition tree.
In the definition below and in the proofs of subsequent claims, we use $\|T\|$ to denote the number of edges of a tree $T$.

\begin{definition}
\label{d.decomp} 
Let $M$ be a finite matroid. 
A \term{depth-decomposition of $M$} is a pair $(T,f)$, where $T$ is a rooted tree and $f: M \to V(T)$ is a mapping such that 
\begin{itemize}
\item[(1)] $r(M) = \|T\|$, and 
\item[(2)] $r(X) \le \|T^*(X)\|$ for every $X\subseteq M$, 
\end{itemize}
where $T^*(X)$ is the union of paths from the root to all the vertices
in $f(X)$.
The \term{branch-depth of a matroid $M$}, denoted by $\bd(M)$, is the smallest depth of its depth-decomposition,
i.e., the smallest depth of a rooted tree $T$ such that $(T,f)$ is a depth-decomposition of $M$.
\end{definition} 
For any matroid $M$ there is a trivial decomposition where the tree is a path of length $r(M)$
with one of its end vertices being the root and all the elements of $M$ mapped to the other end vertex.
The following lemma gives us a way to modify a depth-decomposition.
\begin{lemma}
\label{l.refined}
Let  $M$ be a finite matroid. If $(T,f)$ is a depth-decomposition of $M$, then there is a depth-decomposition $(T,f')$ 
	such that $f'(e)$ is a leaf of $T$ for every element $e$ of $M$. 
\end{lemma}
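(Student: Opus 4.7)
The plan is to construct $f'$ from $f$ by ``pushing every non-leaf image down to a leaf'' while keeping the tree $T$ itself unchanged. Concretely, for each element $e\in M$ for which $f(e)$ is already a leaf, I would set $f'(e):=f(e)$, and for each $e$ with $f(e)$ an internal vertex, I would set $f'(e)$ to be an arbitrarily chosen leaf of $T$ lying in the subtree rooted at $f(e)$. Since $T$ itself is not modified, condition~(1) of Definition~\ref{d.decomp}, namely $r(M)=\|T\|$, is automatically inherited from $(T,f)$, so only condition~(2) needs attention.

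The verification of condition~(2) rests on a single monotonicity observation: for every $e\in M$, the vertex $f'(e)$ lies in the subtree of $T$ rooted at $f(e)$, hence the path from the root to $f'(e)$ contains the path from the root to $f(e)$ as a prefix. Consequently, for every $X\se M$ and every $e\in X$, the root-to-$f'(e)$ path sits above the root-to-$f(e)$ path, which gives $T^*_f(X)\se T^*_{f'}(X)$ (where the subscript indicates which mapping is being used to define $T^*$). Therefore
$$\|T^*_{f'}(X)\|\ \ge\ \|T^*_f(X)\|\ \ge\ r(X),$$
where the last inequality is condition~(2) for the original depth-decomposition $(T,f)$. This shows that $(T,f')$ is again a depth-decomposition, and by construction $f'(e)$ is a leaf for every $e\in M$.

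I do not expect any serious obstacle here. The only point that requires care is the realisation that pushing elements toward leaves can only enlarge each $T^*(X)$, never shrink it, which is precisely why condition~(2) survives the modification. The argument would become more delicate if one simultaneously wanted to prune $T$ (for instance, to remove leaves in the image of no element), but since the lemma permits re-using the same rooted tree, both the construction and the verification are essentially immediate from this monotonicity.
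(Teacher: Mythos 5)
Your construction and verification coincide with the paper's proof: both define $f'$ by sending each element whose image is an internal vertex to a descendant leaf, and both note that this can only enlarge $T^*(X)$, so condition~(2) is inherited. No meaningful difference.
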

\begin{proof}
Let $(T,f)$ be a depth-decomposition of $M$. 
For every inner vertex $v$ of $T$, let $\ell(v)$ be a leaf of $T$ that is a descendant of $v$. 
For every $e\in M$, define $f'$ as follows. 
$$f'(e):=\begin{cases}
f(e)& \text{if } f(e) \text{ is a leaf of $T$, and}\\
\ell(f(e)) & \text{otherwise.}
\end{cases}$$
We now verify that $(T,f')$ is a depth-decomposition of $M$.
The part (1) of Definition~\ref{d.decomp} holds since we have not changed the tree $T$.
To check part (2), observe that for any subset $X$ of the elements of $M$,
the subtree $T^*(X)$ with respect to $f$ is contained in the subtree $T^*(X)$ with respect to $f'$.
Hence, $(T,f')$ is a depth-decomposition of $M$.
\end{proof}

As the notion of graph tree-depth \cite{bib-nom2012},
the parameter of matroid branch-depth is also minor monotone. 
\begin{proposition} 
\label{p.minor}
If $M'$ is a minor of $M$, then $\bd(M') \le \bd(M)$.
\end{proposition}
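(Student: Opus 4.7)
The plan is to reduce the statement to the two basic minor operations and modify an optimal depth-decomposition locally. Since every minor of $M$ is obtained by a sequence of single-element deletions and contractions, it suffices to establish $\bd(M \setminus e) \le \bd(M)$ and $\bd(M/e) \le \bd(M)$ for each element $e$. I would fix a depth-decomposition $(T,f)$ of $M$ of optimal depth and, by Lemma~\ref{l.refined}, assume that $f$ sends every element to a leaf of $T$.

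The easy cases are deletion of an element that is not a bridge (in particular a loop) and contraction of a loop. In the first, $r(M\setminus e) = r(M) = \|T\|$, so $(T, f|_{M\setminus e})$ already serves as a depth-decomposition of $M\setminus e$. In the second, $M/e = M \setminus e$, reducing to the first case.

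The remaining two cases are deletion of a bridge $e$ and contraction of a non-loop $e$; in both the rank drops by one. I would set $v := f(e)$, which is a leaf of $T$ and not its root because condition~(2) applied to $\{e\}$ forces the depth of $v$ to be at least $r(\{e\}) = 1$. Letting $u$ denote the parent of $v$, I form $T'$ from $T$ by deleting the leaf $v$ and define $f'$ to coincide with $f$ except that any element previously mapped to $v$ is reassigned to $u$. Then $\|T'\| = \|T\| - 1$, which matches $r(M\setminus e)$ or $r(M/e)$ as required, and the depth of $T'$ does not exceed that of $T$. The core step is verifying condition~(2) for $(T',f')$, split into two subcases. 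When $v \notin f(X)$ the two subtrees coincide, $(T')^*(X) = T^*(X)$, and the bound is inherited (in the contraction case via monotonicity $r_{M/e}(X) \le r_M(X)$). When $v \in f(X)$ one observes $T^*(X \cup \{e\}) = T^*(X)$, since $f(e) = v$ is already hit by $X$; combined with the identity $r_{M\setminus e}(X) = r_M(X \cup \{e\}) - 1$ (in the bridge case, because $e \notin \cl(X)$) or $r_{M/e}(X) = r_M(X \cup \{e\}) - 1$ (by the definition of contraction for a non-loop), this yields $r(X) \le \|T^*(X)\| - 1 = \|(T')^*(X)\|$.

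The main obstacle I anticipate is precisely this last subcase: several elements may share the leaf $v = f(e)$, so removing $v$ costs one edge in $(T')^*(X)$ whenever $X$ hits $v$, and the argument hinges on the structural hypothesis—bridge or non-loop contraction—delivering the matching one-unit rank drop that absorbs this loss. Iterating over the operations defining a minor then completes the argument.
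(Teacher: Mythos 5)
Your proof is correct and takes essentially the same route as the paper: reduce to single-element deletion and contraction, normalize the decomposition via Lemma~\ref{l.refined} so that $f$ maps to leaves, keep $(T,f)$ unchanged in the rank-preserving cases (loops, non-bridge deletion), and in the rank-dropping cases delete the leaf $f(e)$, reassign its preimage to the parent, and verify condition~(2) by the case distinction on whether $f(e)$ is hit by $f(X)$. The only cosmetic difference is that the paper dispatches bridge deletion via the identity $M\setminus e = M/e$ and thus argues only the non-loop contraction case in detail, whereas you verify the bridge-deletion rank identity $r_{M\setminus e}(X) = r_M(X\cup\{e\})-1$ directly; your explicit observation that $f(e)$ cannot be the root (from condition~(2) applied to $\{e\}$) is a small tidy-up of a point the paper leaves implicit.
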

\begin{proof} 
Since a minor of a matroid is obtained by a sequence of contractions and deletions of some of its elements,
it is enough to show that if $M$ is a matroid and $e$ is an element of $M$, then the branch-depth of both $M / e$ and $M \setminus e$ is at most $\bd(M)$. 
Fix a matroid $M$ and $e \in M$. 
Let $(T,f)$ be a \decomp\ of $M$ of depth $\bd(M)$. 
By Lemma \ref{l.refined}, we can assume that $f(e)$ is a leaf of $T$ for every $e\in M$. 

If $e$ is a loop in $M$ then $M_0 := M \setminus e = M / e$. It is easy to see that for every $X\subseteq M_0$ we have $r_{M_0}(X)=r_{M}(X)$. 
Hence, $(T,f|_{M_0})$ is a \decomp\ of $M_0$. 

We now assume that $e$ is not a loop. 
Let $M_1 := M / e$, 
let $u$ be the leaf $f(e)$, and
let $v$ be the parent of $u$. 
Set $T_1 = T \setminus u$ and define $f_1: M_1 \to V(T_1)$ as follows:
$$
f_1(x)=\left\{
\begin{array}{ll}
v    & \textrm{if $f(x)=u,$ and} \\
f(x) & \textrm{otherwise.}
\end{array}\right.
$$
We now show that $(T_1,f_1)$ is a \decomp\ of $M_1$. 
Since $e$ is not a loop, we have $r(M_1)=r(M)-1$. 
Thus, $\|T_1\| = r(M_1)$.
Now, consider a subset $X\subseteq M_1$. 
Recall that $r_{M_1}(X) = r_M(X\cup\{e\}) - 1$.
If $u \in f(X),$ we employ the bound on the rank function provided by the \decomp\ of $M$: 
$$
\|T_1^*(X)\| = \|T^*(X\cup\{e\})\|-1 \ge r_M(X\cup\{e\})-1 = r_{M_1}(X).$$
Otherwise, we have
$$\|T_1^*(X)\| = \|T^*(X)\|\ge r_M(X)\ge r_{M_1}(X).$$

Let $M_2 = M \setminus e$. 
If $e$ is a bridge then $M \setminus e = M/e$. 
Hence, we may assume that $e$ is not a bridge in $M$. 
In this case, we claim that $(T,f|_{M_2})$ is a \decomp\ of $M_2$. 
Since the rank of $M_2$ equals the rank of $M$,
we have $r(M_2) = \|T\|$, and
it also holds that $\|T^*(X)\|\ge r_M(X) = r_{M_2}(X)$ for every $X\subseteq M_2$. 
\end{proof}

If a graph $G$ has a path with $n$ vertices, its tree-depth (see Definition~\ref{d.treedepth} if needed)
is at least $\lceil\log_2 n+1\rceil-1$, see e.g.~\cite[Chapter 6]{bib-nom2012}.
The next proposition relates the length of circuits in a matroid to its branch-depth,
in the analogy to the relation between the graph tree-depth and the existence of long paths. 

\begin{proposition}
\label{p.circuit}
Let $M$ be a matroid and $d$ the size of its largest circuit. 
Then $\bd(M) \ge \log_2 d $.
\end{proposition}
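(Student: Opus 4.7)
The plan is to prove the equivalent bound $d \leq 2^h$, where $h := \bd(M)$, by induction on $h$.

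The base case $h = 0$ is immediate: the depth-decomposition tree consists of a single vertex, so $r(M) = 0$ forces every element to be a loop, and the only circuits are single loops, giving $d = 1 = 2^0$.

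For the inductive step, I would take a depth-decomposition $(T,f)$ of $M$ of depth $h \geq 1$ and, by Lemma \ref{l.refined}, assume that $f$ maps every element of $M$ to a leaf of $T$. Let $C$ be a circuit of size $d$, and let $u$ be the least common ancestor of $f(C)$ in $T$. If $u$ is itself a leaf, then $f(C) = \{u\}$ and the rank condition applied to $C$ yields $d - 1 = r(C) \leq \|T^*(C)\| = \mathrm{depth}(u) \leq h$, so $d \leq h + 1 \leq 2^h$ (an inequality that is easily verified for $h \geq 1$). Otherwise $u$ is an internal vertex, and by the minimality of $u$ as a common ancestor, at least two of its children $w_1, \ldots, w_m$ (with $m \geq 2$) contain elements of $f(C)$ in their subtrees $T_{w_j}$. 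Writing $D_j := C \cap f^{-1}(V(T_{w_j}))$, the sets $D_1, \ldots, D_m$ partition $C$ into nonempty proper subsets, each independent in $M$.

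The key step is then to partition the index set $[m] = I \sqcup ([m] \setminus I)$ in a balanced fashion so that both $D := \bigsqcup_{j \in I} D_j$ and $C \setminus D$ have size at most $\lceil d/2 \rceil$; since each $|D_j| < d$, a greedy assignment achieves this. Both $D$ and $C \setminus D$ are proper subsets of $C$ and therefore independent in $M$, so $D$ is a circuit of size $|D|$ in the contraction $M / (C \setminus D)$ and, symmetrically, $C \setminus D$ is a circuit of size $|C \setminus D|$ in $M / D$. I would then show that each of these contractions strictly reduces branch-depth, namely that $\bd(M/(C \setminus D)) \leq h - 1$ and $\bd(M / D) \leq h - 1$. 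Applying the inductive hypothesis to both minors yields $|D|, |C \setminus D| \leq 2^{h-1}$, and hence $d \leq 2^h$.

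The hard part will be making the depth reduction rigorous. The contraction construction from the proof of Proposition~\ref{p.minor} only removes one leaf per contracted element, so one must carefully sequence the contractions of the elements of $C \setminus D$ (all of which lie in the subtrees $T_{w_j}$ for $j \notin I$), re-applying Lemma \ref{l.refined} to keep elements at leaves throughout, and argue that at the end the subtrees $T_{w_j}$ with $j \notin I$ have been entirely pruned, dropping the overall depth to at most $h - 1$. This may require also contracting some non-circuit elements residing in those subtrees to fully clear them. Once the depth-reduction step is justified, the induction goes through and gives $\bd(M) \geq \log_2 d$.
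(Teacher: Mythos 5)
Your overall plan — split a circuit into two halves, contract one half, recurse — has the right flavor, but the step you yourself flag as ``the hard part'' is a genuine gap, and it cannot be repaired within the present setup. The claim $\bd(M/(C\setminus D)) \leq h - 1$ is simply false in general. Consider $M = C_8 \oplus C_8$ with circuits $A = \{a_1,\dots,a_8\}$ and $B=\{b_1,\dots,b_8\}$, let $C=A$, and take $D=\{a_1,\dots,a_4\}$. Then $M/(C\setminus D) \cong C_4 \oplus C_8$, and $\bd(C_4\oplus C_8) \ge \bd(C_8)=\bd(M)=h$, not $h-1$. No sequencing of contractions (nor contracting extra non-circuit elements in the cleared subtrees) helps, because the obstruction is not local: $\bd$ is a global parameter, and the depth of $T$ may be realized by a part of $M$ entirely untouched by the contraction. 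A secondary, more minor issue is the greedy balancing claim: a single $D_j$ can have size arbitrarily close to $d$, so you cannot guarantee both sides have size at most $\lceil d/2\rceil$. This one is easy to patch, since you apply the inductive hypothesis to both $M/(C\setminus D)$ and $M/D$ anyway, which would give $|D|\le 2^{h-1}$ and $|C\setminus D|\le 2^{h-1}$ and hence $d\le 2^h$ with no balance needed — but only once the main gap is closed.

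The paper avoids your gap by first passing to the circuit matroid itself: since $M$ has a circuit of size $d$, it has $C_d$ as a minor, and by Proposition~\ref{p.minor} it suffices to prove $\bd(C_d)\ge\log_2 d$. Once the ambient matroid \emph{is} $C_d$, every element lies on the circuit, and the decomposition tree is tightly constrained — the root must have degree one, and a counting argument ($n_i \le m_i+\ell$ together with $m_1+m_2+\ell = d-1 = n_1+n_2-1$) forces each branch of the first branching vertex to carry strictly more elements than edges. The paper then inducts on $d$ rather than on $h$: it contracts the smaller branch and explicitly \emph{constructs} a depth-decomposition of the resulting $C_{n_2}$ whose depth is at least one smaller than that of $T$, then applies the inductive hypothesis to $C_{n_2}$. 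Inducting on $d$ while fixing a specific decomposition buys you the crucial flexibility that you only need to exhibit \emph{one} good decomposition of the minor; inducting on $h$ as you do requires ruling out \emph{all} depth-$h$ decompositions of the minor, which, as the example shows, is hopeless without first localizing to $C_d$.
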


\begin{proof} 
Let $C_d$ be the matroid  that consists of exactly one circuit of size $d$.
If $M$ has a circuit of length $d$, then $M$ contains $C_d$ as a minor.
Hence, it is enough to show by Proposition~\ref{p.minor} that the branch-depth of $C_d$ is at least $\log_2 d$.
We prove this statement by induction on $d$.

Let $(T,f)$ be a \decomp\ of $C_d$ such that $T$ has depth $\bd(C_d)$ and such that $f(e)$ is a leaf of $T$ for every $e\in C_d$.
Its existence follows from Lemma~\ref{l.refined}. 

Let $w$ be the root of $T$. We first prove that the degree of $w$ is one.
Suppose not. Let $W$ be vertices of one of the subtrees of $w$, and
let $T_1$ be the subtree induced by $W\cup\{w\}$ and $T_2$ the subtree induced by the vertices not contained in $W$.
Observe that $\|T_i\|\ge r(f^{-1}(V(T_i))) = |f^{-1}(V(T_i))|$ for $i\in \{1,2\}$. 
It follows that $r(C_d)= \|T\| = \|T_1\| + \|T_2\| \ge |f^{-1}(V(T_1))|+|f^{-1}(V(T_2))| = |C_d| = r(C_d) + 1$,
which is impossible.

Let $v$ be a vertex of $T$ of degree larger than two that is as close to the root $w$ as possible. 
If there is no such vertex, $T$ is a path and it has depth $d - 1 \ge \log_2(d)$.

Let $P$ be a path from $w$ to $v$, $\ell$ its length, and $W$ vertices of one of the subtrees of $v$.
Let $T_1$ be the subtree induced by $W\cup\{v\}$ and $T_2$ the subtree induced by $v$ and the vertices not contained in $W$ or in $P$.
Further, let $m_i=\|T_i\|$ and $n_i=|f^{-1}(V(T_i))|$ for $i\in \{1,2\}$. Observe that both $n_1$ and $n_2$ are non-zero, and that 
\begin{equation}
m_1+m_2+\ell = r(C_d) = d-1 \qquad \textrm{and}\qquad n_1+n_2 = |C_d| = d.
\label{e.thmcircuit}
\end{equation} 
Since $f^{-1}(V(T_i))$ is a proper subset of $C_d$,
it is independent and we get that $n_i\le m_i+\ell$ for $i\in\{1,2\}$.
This yields that $m_i\le n_i-1$; otherwise, $n_{3-i}>m_{3-i}+\ell$ by (\ref{e.thmcircuit}).
By symmetry, we may assume that $n_1\le n_2$, which gives $n_1\le \frac{d}2$.

Let $M':=C_d/f^{-1}(V(T_1))$. Observe that $M'$ is isomorphic to $C_{n_2}$.
Let $T'$ be the tree obtained by considering a path of length of $m_1+\ell-n_1\ge 0$,
identifying one of its end vertices with the root of the tree $T_2$ and
rooting the resulting tree at the other end vertex of the path.
Observe that $\|T'\|=r(M')$ since the number of its edges is smaller by $r(V(T_1))$ compared to $M$, and
that the tree $T'$ with $f|_{M'}$ is a \decomp\ of $M'$. 
By induction, the depth of $T'$ is at least $\log_2 \frac{d}{2}$.
Since $m_1+\ell-n_1=\ell-(n_1-m_1)\le\ell-1$,
it follows that the depth of $T$ is at least $\log_2 \frac{d}{2} + 1 = \log_2 d$.
\end{proof} 

We now recall the notion of graph tree-depth. 
We use $\cl(T)$ to denote the transitive closure of a rooted tree $T$, 
i.e., the graph with vertex set $V(T)$ and an edge connecting each pair of vertices $u$ and $v$ such that $u$ is an ancestor of $v$ in $T$. 
\begin{definition} 
\label{d.treedepth} 
The \term{tree-depth} $\td(G)$ of a graph $G$ is the smallest possible depth of a rooted tree $T$ with the same vertex set as $G$ 
such that $G \se \cl(T)$. 
Such a tree $T$ is called an \term{optimal} tree-depth decomposition of $G$. 
\end{definition}
We next relate the branch-depth of a graphic matroid to the tree-depth of the underlying graph. 
\begin{proposition}
\label{p.relationtoG} 
The branch-depth of a graphic matroid $M(G)$ is at most $\td(G)$. 
\end{proposition}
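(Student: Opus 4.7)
The plan is to construct an explicit \decomp\ $(T', f)$ of $M(G)$ whose underlying tree $T'$ has depth at most $\td(G)$, built directly from an optimal tree-depth decomposition of $G$.

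I will first treat the case of a connected $G$. Let $T$ be an optimal tree-depth decomposition of $G$ in the sense of Definition~\ref{d.treedepth}, so that $V(T) = V(G)$, $G \se \cl(T)$, and the depth of $T$ equals $\td(G)$. I set $T' := T$, which is a rooted tree on $|V(G)|$ vertices, so $\|T'\| = |V(G)| - 1 = r(M(G))$ and condition~(1) of Definition~\ref{d.decomp} is satisfied. I define $f \colon E(G) \to V(T')$ by letting $f(uv)$ be whichever of $u,v$ is a descendant of the other in $T$; this is well-defined because every edge of $G$ lies in $\cl(T)$, forcing one endpoint to be an ancestor of the other.

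The key observation for condition~(2) is that $V(X) \se V(T'^*(X))$ for every $X \se E(G)$. Indeed, any $v \in V(X)$ is incident with some $e = uv \in X$, and either $v$ is the deeper endpoint so that $v = f(e) \in V(T'^*(X))$, or $v$ is the shallower endpoint, in which case $v$ is an ancestor of $f(e)$ in $T$ and therefore lies on the root-to-$f(e)$ path that is part of $T'^*(X)$. Using $\|T'^*(X)\| = |V(T'^*(X))| - 1$ for nonempty $X$, this yields $\|T'^*(X)\| \ge |V(X)| - 1 \ge |V(X)| - c(X) = r(X)$, where $c(X) \ge 1$ is the number of connected components of the subgraph $(V(X), X)$; the empty-$X$ case is trivial.

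For a disconnected graph $G$ with components $G_1, \ldots, G_k$, I would apply the above construction to each $G_i$ to obtain rooted trees $T_i$ and then form $T'$ by identifying their roots into a single common root $r^\ast$. The depth of $T'$ is $\max_i \td(G_i) \le \td(G)$ (the inequality being standard, by restricting an optimal decomposition of $G$ to $V(G_i)$), and its number of edges equals $\sum_i(|V(G_i)|-1) = |V(G)| - k = r(M(G))$. Condition~(2) then reduces to the connected case componentwise, because for $X \se E(G)$ the subtrees $T'^*(X \cap E(G_i))$ share only the root $r^\ast$ and so contribute additively to $\|T'^*(X)\|$, while the ranks also add across components in $M(G)$. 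I do not anticipate any serious obstacle; the one delicate point is the bookkeeping at $r^\ast$ and the off-by-one in comparing $|V(T'^*(X))|$ with $|V(X)|$, and both are absorbed by the inequality $c(X) \ge 1$ for nonempty $X$.
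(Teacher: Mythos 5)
Your proof is correct, and the core construction is identical to the paper's: take an optimal tree-depth decomposition $T$ of $G$ and map each edge to its deeper endpoint. The difference is in how condition~(2) is checked: the paper first reduces to independent $X$, decomposes $X$ into connected components $X_1,\dots,X_k$ with vertex sets $U_1,\dots,U_k$, and counts edges of $T^*(X)$ via the disjoint sets $U_i' = f^{-1}(X_i)$, showing $\|T^*(X)\| \ge \sum_i(|U_i|-1) = r(X)$; your observation that $V(X) \subseteq V(T'^*(X))$ together with the rank formula $r(X) = |V(X)| - c(X)$ yields the same inequality more directly without the independence reduction, which is slightly cleaner. The paper also reduces all the way to the $2$-connected case by induction on blocks, whereas you reduce only to connected $G$ by identifying roots across components; both are sound, and yours is the more economical reduction.
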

\begin{proof}
Let $G$ be a graph on $n$ vertices and let $M := M(G)$ be the corresponding graphic matroid. 
We proceed by induction on $n$. If $n=1$ or $n=2$, the claim holds.
If $G$ is not 2-connected, let $G_1,\ldots,G_k$ be its 2-connected components (blocks).
Since the tree-depth is a minor monotone parameter, each $G_i$ has tree-depth at most $\td(G)$ and
the matroid $M(G_i)$ has a depth-decomposition with depth at most $\td(G)$ by induction.
Since the matroid $M$ is the disjoint union of the matroids $M(G_1),\ldots,M(G_k)$,
a depth-decomposition of $M$ can be obtained by identifying the roots of depth-decompositions of $M(G_1),\ldots,M(G_k)$.
The depth of such a depth-decomposition is at most $\td(G)$ and the claim follows.
So, we assume that $G$ is 2-connected in the rest of the proof.

Let $T$ be an optimal tree-depth decomposition of $G$.
We construct a \decomp\ $(T, f)$ of $M$ as follows. 
The function $f$ maps an element $e\in M$ to the end vertex of $e$ that is farther from the root of $T$.
We verify the two conditions from Definition \ref{d.decomp}. 
Since $G$ is connected, we indeed have $r(M) = n - 1 = |V(T)| - 1$ as required by the condition (1). 
Consider a subset $X \se M$.
We show that $r(X) \le \|T^*(X)\|$ to establish the condition (2).
We may assume that $X$ has no circuit:
if $X$ contained a circuit,
removing an element from a circuit of $X$ would not change $r(X)$ and it could not increase $\|T^*(X)\|$.
So, we assume that $X$ is independent and $r(X)=|X|$.

Let $X_1,\ldots,X_k$ be the edge sets of the connected components of the graph $(V(G),X)$, and
let $U_1,\ldots,U_k$ be the their vertex sets.
Further, let $U'_i$ be the set $f^{-1}(X_i)$.
Note that the sets $U_1,\ldots,U_k$ are disjoint and $U'_i$ is a subset of $U_i$.
Since $(U_i,X_i)$ is connected, the set $U_i$ contains a unique vertex closest to the root, and
$U'_i$ is equal to $U_i$ with the vertex closest to the root removed.
Since the subtree $T^*(X)$ contains an edge from each vertex of $U'_i$ to its parent, and
the sets $U_1,\ldots,U_k$ are disjoint (and so are the sets $U'_1,\ldots,U'_k$),
$\|T^*(X)\|$ is at least $|U'_1|+\cdots+|U'_k|\ge |U_1|+\cdots+|U_k|-k$.
Since the rank of $X$ is equal to the sum of ranks of the sets $X_i$ and
$r(X_i)=|X_i|=|U_i|-1$, it follows that $r(X)=|U_1|+\cdots+|U_k|-k$.
This finishes the proof of the lemma.
\end{proof}

Note that the converse of Proposition \ref{p.relationtoG} does not hold.
The graphic matroids of an $(n+1)$-vertex star $K_{1,n}$ and an $(n+1)$-vertex path $P_{n+1}$ are isomorphic and both have branch-depth one
despite of the tree-depth of $K_{1,n}$ being one and the tree-depth of $P_{n+1}$ being $\lceil\log_2 n+1\rceil-1$.
Nevertheless, the following inequality holds for 2-connected graphs. 

\begin{proposition}
\label{p.relationtoG2} 
Let $G$ be a 2-connected graph with tree-depth $d$. 
Then, the branch-depth of a graphic matroid $M(G)$ is at least $\frac{1}{2}\log_2{d}$. 
\end{proposition}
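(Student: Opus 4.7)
My plan is to reduce the claim to a purely graph-theoretic statement about the circumference of $G$, and then apply Proposition~\ref{p.circuit}. Since the circuits of the graphic matroid $M(G)$ are exactly the cycles of $G$, Proposition~\ref{p.circuit} gives $\bd(M(G)) \geq \log_2 c(G)$, where $c(G)$ is the circumference of $G$. Hence it suffices to show that every 2-connected graph $G$ with $\td(G) = d$ contains a cycle of length at least $\sqrt{d}$.

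I would establish this in two stages. First, I would show that for any connected graph $H$ one has $\td(H) \leq L(H)$, where $L(H)$ denotes the length in edges of the longest path in $H$. For this, observe that a DFS tree $T_{\mathrm{DFS}}$ of $H$ is an elimination tree for $H$: every non-tree edge in a DFS tree is a back-edge and hence connects an ancestor to a descendant. The depth of any DFS tree is bounded above by $L(H)$, since a root-to-leaf path in the DFS tree is a simple path of $H$. So $\td(H) \leq \mathrm{depth}(T_{\mathrm{DFS}}) \leq L(H)$. Applied to $G$, this produces a path $P$ in $G$ of length at least $d$.

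Second, I would convert this long path into a long cycle, using 2-connectivity. Let $P = v_0 v_1 \ldots v_L$ with $L \geq d$. By maximality of $P$, all neighbors of $v_0$ and of $v_L$ lie on $P$, and by 2-connectivity both endpoints have degree at least two. If there exist a neighbor $v_i$ of $v_0$ and a neighbor $v_j$ of $v_L$ with $j < i$, then $v_0 v_1 \ldots v_j v_L v_{L-1} \ldots v_i v_0$ is a cycle of length $L - (i-j) + 2$, which is large whenever $i - j$ is small. When no such ``crossing'' pair exists, one uses P\'osa's rotation to replace $P$ by another longest path whose new left endpoint is $v_{i-1}$ for some neighbor $v_i$ of $v_0$; iterating this operation generates a family of longest paths with varying endpoints, and 2-connectivity forces the crossing configuration to occur eventually. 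A careful accounting then yields a cycle of length at least $\sqrt{L + 1} \geq \sqrt{d}$, and Proposition~\ref{p.circuit} gives $\bd(M(G)) \geq \tfrac{1}{2}\log_2 d$.

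The main obstacle is the second stage: extracting a polynomial lower bound on the circumference in terms of the longest path in a 2-connected graph. While the DFS tree argument of the first stage is essentially one line, the rotation-extension/ear decomposition analysis needed to control cycle length requires careful combinatorial bookkeeping to obtain the clean bound $c(G) \geq \sqrt{L(G)}$.
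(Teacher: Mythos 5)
Your reduction via Proposition~\ref{p.circuit} coincides with the paper's, and your first-stage inequality $\td(H) \le L(H)$ is correct and standard: a DFS tree of a connected graph has only back-edges, so it is a valid elimination tree, and its depth is bounded by the length of a longest path. Where you diverge is at the cycle-length step. The paper discharges it entirely by citing Proposition~6.2 of~\cite{bib-nom2012}, which asserts directly that a $2$-connected graph of tree-depth $d$ contains a cycle of length at least $\sqrt{d}$; you instead try to re-derive that bound from scratch.

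That second stage is a genuine gap. You handle the easy ``crossing'' configuration, which yields a cycle of length $L-(i-j)+2$, but the entire difficulty lies in the non-crossing case, and you explicitly defer it to ``careful accounting.'' That accounting \emph{is} the lemma: the claim that a $2$-connected graph with a path of length $L$ has circumference on the order of $\sqrt{L}$ is a substantive classical result in the Erd\H{o}s--Gallai/Bondy tradition, and P\'osa rotation does not deliver it in one step --- one must track the set of endpoints reachable by rotation, its neighborhood, and quantify how the two endpoint-neighborhoods are forced to interact, none of which appears in your sketch. So the high-level structure matches the paper's, and your first stage is fine, but the combinatorial core of the key lemma is missing; either supply the full rotation analysis or, as the paper does, cite the result.
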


\begin{proof}
Since $\td(G) = d$, the graph $G$ contains a cycle of length at least $\sqrt{d}$ by~\cite[Proposition 6.2]{bib-nom2012}. 
Therefore, by 
Proposition \ref{p.circuit}, $\bd(M(G)) \ge \frac{1}{2}\log_2 d$. 
\end{proof}

\subsection{Technical lemmas}

\label{s.technical}

In this section, we establish further properties of the branch-depth,
which are important to prove the correctness of the algorithm presented later. 
The following two claims follow directly from the definition of contracting an element of a matroid. 

\begin{lemma}
\label{l.tech.contr} 
Let $C$ be a circuit in a matroid $M$. 
Let $e \in C$.
If $|C|> 1$, then the set $C \setminus\{e\}$ is a circuit in $M / e$. 
\end{lemma}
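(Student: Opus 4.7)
The plan is to verify the two defining properties of a circuit for the set $C\setminus\{e\}$ in the matroid $M/e$, working directly from the definition of contraction given earlier in the paper. The key observation to set up is that since $C$ is a circuit with $|C|>1$, the element $e$ cannot be a loop, so the contraction $M/e$ is the genuine (non-deletion) contraction, and one can use the description: $F'\subseteq M\setminus\{e\}$ is independent in $M/e$ if and only if $F'$ is independent in $M$ and $r(F'\cup\{e\})=r(F')+1$.

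First I would show that $C\setminus\{e\}$ is itself dependent in $M/e$. Since $C$ is a circuit of size at least two, its proper subset $C\setminus\{e\}$ is independent in $M$, with $r(C\setminus\{e\})=|C|-1$, while $r(C)=|C|-1$ as well (the rank of a circuit being one less than its size). Hence $r((C\setminus\{e\})\cup\{e\})=|C|-1<|C|=r(C\setminus\{e\})+r(\{e\})$, so the additive condition fails and $C\setminus\{e\}$ is dependent in $M/e$.

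Next I would check minimality. For any proper subset $C'\subsetneq C\setminus\{e\}$, the set $C'\cup\{e\}$ is a proper subset of $C$, so by the minimality of $C$ as a dependent set in $M$, the set $C'\cup\{e\}$ is independent in $M$. In particular $C'$ is independent in $M$ and $r(C'\cup\{e\})=|C'|+1=r(C')+r(\{e\})$, which gives that $C'$ is independent in $M/e$. Thus $C\setminus\{e\}$ is a minimal dependent set of $M/e$, i.e., a circuit.

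There is no real obstacle here; as the authors note, the statement is essentially unpacking the contraction definition plus the fact that circuits of length at least two contain no loops. The only thing to be slightly careful about is that the contraction clause \emph{requires both} $F'$ independent in $M$ \emph{and} the rank additivity, so both directions of the argument (showing dependence of $C\setminus\{e\}$, and independence of any proper subset) must check the correct side of that equivalence.
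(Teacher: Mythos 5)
Your proof is correct and takes essentially the same approach as the paper: both arguments boil down to the contraction rank identity $r_{M/e}(X)=r_M(X\cup\{e\})-1$ (you phrase it via the independence-plus-rank-additivity clause, the paper uses the rank formula directly), applied to $C\setminus\{e\}$ to get dependence and to its proper subsets to get independence. The paper's version is just a more compressed presentation of the same computation.
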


\begin{proof}
By the definition of contracting an element, it follows that
$r_{M/e}(C\setminus e)=r_M(C)-1=|C|-2$.
On the other hand, if $X$ is a proper subset of $C$,
then $r_{M/e}(X)=r_M(X\cup\{e\})-1=|X|$.
Hence, $C\setminus\{e\}$ is a circuit in $M/e$.
\end{proof}

\begin{lemma}
\label{l.tech.decontr} 
Let $M$ be a matroid and $e$ an element of $M$.
If $C$ is a circuit of $M/e$, then $M$ has a circuit $C'$ such that $C'\supseteq C$.
\end{lemma}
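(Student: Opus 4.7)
The plan is to prove the slightly sharper statement that either $C$ itself is a circuit of $M$, or $C\cup\{e\}$ is a circuit of $M$; in either case the desired $C'\supseteq C$ exists. I would separate the trivial case where $e$ is a loop (then $M/e=M\setminus e$ and the two matroids have the same circuits, so take $C'=C$) from the main case $r_M(\{e\})=1$. In the main case I would work purely with the rank-formula for contraction, namely $r_{M/e}(X)=r_M(X\cup\{e\})-1$ for every $X\subseteq M\setminus\{e\}$.

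Applying this formula to the circuit $C$ of $M/e$ gives the two pieces of information I need. First, from $r_{M/e}(C)=|C|-1$ I get $r_M(C\cup\{e\})=|C|$, so $C\cup\{e\}$ is dependent in $M$. Second, from $r_{M/e}(X)=|X|$ for every proper subset $X\subsetneq C$ I get $r_M(X\cup\{e\})=|X|+1$, so $X\cup\{e\}$ is independent in $M$; in particular every such $X$ is independent in $M$.

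With these two facts the conclusion splits into a clean dichotomy. If $C$ is dependent in $M$, then since every proper subset of $C$ is independent in $M$, the set $C$ is itself a circuit of $M$ and I take $C'=C$. If instead $C$ is independent in $M$, then $C\cup\{e\}$ is dependent and therefore contains some circuit $C'$ of $M$; since $C$ is independent, $e\in C'$, so $C'=X\cup\{e\}$ for some $X\subseteq C$; but for any proper $X\subsetneq C$ the set $X\cup\{e\}$ was shown to be independent, forcing $X=C$ and hence $C'=C\cup\{e\}$.

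There is no real obstacle here beyond keeping the two sub-cases straight and avoiding a silly mistake with the rank arithmetic when $e$ is a loop. The whole argument is a short exercise with the definition of contraction; I do not expect any subtlety to appear.
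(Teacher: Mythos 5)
Your proof is correct and takes essentially the same route as the paper: show every proper subset of $C$ is independent in $M$ via the contraction rank formula, then split on whether $C$ is independent in $M$, concluding $C'=C$ or $C'=C\cup\{e\}$. The one minor difference is that you separate out the loop case explicitly (where the paper's use of $r_{M/e}(X)=r_M(X\cup\{e\})-1$ tacitly assumes $e$ is not a loop), which is a small improvement in rigor but the substance is identical.
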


\begin{proof}
First observe that any proper subset of $C$ is independent in $M$:
indeed, if $X$ is a proper subset of $C$,
then $r_M(X)\ge r_M(X\cup\{e\})-1=r_{M/e}(X)=|X|$.
If $C$ is not independent in $M$, then $C$ is a circuit.

Suppose that $C$ is independent in $M$.
We claim that $C\cup\{e\}$ is a circuit.
First, $r_M(C\cup\{e\})=r_{M/e}(C)+1=|C|$, i.e., $C\cup\{e\}$ is not independent.
Let $X$ be a subset of $C\cup\{e\}$.
We have already observed that $X$ is independent if $e\not\in X$.
If $e\in X$, then $r_M(X\cup\{e\})=r_{M/e}(X)+1=|X|+1$,
i.e., $X$ is independent. We conclude that $C\cup\{e\}$ is a circuit.
\end{proof}

When encountering a circuit, the algorithm is going to proceed by contracting one of its elements. 
The following lemma will be crucial for the analysis. 

\begin{lemma}
\label{l.tech.1}
Let $M$ be a connected matroid, $e$ an element of $M$ such that $M/e$ is disconnected, and let $M_1, \dots, M_k$ be the components of $M/e$. 
For every circuit $C$ of $M$ containing $e$, there exists $i \in \{1,\dots,k\}$ such that $C\subseteq M_i\cup \{e\}$.
\end{lemma}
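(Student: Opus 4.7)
The plan is to leverage Lemma \ref{l.tech.contr} together with the fact, recorded in Subsection \ref{subs:finMatroids}, that circuits of a matroid are equivalence classes under the ``lying in a common circuit'' relation, and hence any single circuit is contained in exactly one component.

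First I would dispose of the degenerate case. Since $M$ is connected and $M/e$ has at least two components, $M$ has more than one element, so $e$ cannot be a loop (otherwise $\{e\}$ would be a component of $M$ by itself). Consequently every circuit $C$ of $M$ containing $e$ has $|C| \geq 2$.

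The main step is then immediate: by Lemma \ref{l.tech.contr}, the set $C':=C\setminus\{e\}$ is a circuit of $M/e$. Since every circuit of a matroid is contained within a single component of that matroid (any two elements of a common circuit are, by definition, in the same component), there exists $i\in\{1,\dots,k\}$ with $C'\subseteq M_i$. Adding back $e$ gives $C\subseteq M_i\cup\{e\}$, as required.

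There is no real obstacle here; the lemma is essentially a one-line consequence of Lemma \ref{l.tech.contr} and the component structure of a matroid, once the loop case is observed to be vacuous.
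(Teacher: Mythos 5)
Your proof is correct, and it takes a genuinely shorter route than the paper. The paper establishes the claim by contradiction: assuming $C\setminus\{e\}$ meets two distinct components, it computes $r_{M/e}(C\setminus\{e\})$ two ways — once as $|C|-2$ via Lemma~\ref{l.tech.contr}, once as $|D_1|+|D_2|=|C|-1$ via independence of the proper subsets $D_1, D_2$ in their respective components — and gets a contradiction. You instead invoke the equivalence-class characterization of components (recorded in Subsection~\ref{subs:finMatroids}: the components are the classes under ``contained in a common circuit''), which immediately implies that the circuit $C\setminus\{e\}$ of $M/e$ lives in a single component. Both are valid; the paper's rank computation is self-contained and essentially re-proves the fact you quote, whereas your version is shorter but leans on that standard fact as a black box. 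Your preliminary observation — that $e$ cannot be a loop because a loop would be a singleton component of the connected matroid $M$, which has more than one element since $M/e$ is disconnected — is correct and is exactly what licenses the application of Lemma~\ref{l.tech.contr} to a circuit $C\ni e$ of size at least two.
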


\begin{proof}
Suppose that $C$ contains an element from $M_1$ and an element from $M_i$, $i>1$.
Let $M'_2$ be the union of $M_2,\ldots,M_k$, and let $D_1=C\cap M_1$ and $D_2=C\cap M'_2$.
Hence, we get the following
$$|D_1|+|D_2| =|C|-1 = r_M(C) = r_{M/e}(C \setminus e)+1\;.$$
Since $M_1,\ldots,M_k$ are the components of $M$,
it follows that $r_{M/e}(C\setminus \{e\})=r_{M_1}(D_1)+r_{M'_2}(D_2)$.
However, $C\setminus \{e\}$ is a circuit in $M/e$ by Lemma~\ref{l.tech.contr}.
Since $D_1$ and $D_2$ are proper subsets of $C\setminus \{e\}$,
both $D_1$ and $D_2$ are independent in $M/e$.
Hence, $D_1$ and $D_2$ are independent in $M_1$ and $M'_2$, respectively.
It follows that $r_{M/e}(C\setminus \{e\})=|D_1|+|D_2|$, which is impossible.
\end{proof} 

Lemma \ref{l.tech.contr} and Lemma \ref{l.tech.1} yield the following. 

\begin{lemma}
\label{l.circuit3}
Let $M$ be a connected matroid. 
Let $e$ be an element of $M$ such that $M/e$ is not connected and let $M_1,\dots,M_k$ be the components of $M/e$. 
For each $i=1,\dots,k$ there is a circuit $C_i$ in $M$ containing $e$ such that $C_i\subseteq M_i\cup\{e\}$.
\end{lemma}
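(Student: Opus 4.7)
The plan is to combine Lemma~\ref{l.tech.1} with the standard connectivity fact that in a connected matroid with more than one element, any two distinct elements lie in a common circuit. This latter fact is implicit in the discussion of components as equivalence classes under the relation ``two elements lie in a common circuit'' given in Subsection~\ref{subs:finMatroids}.

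First I would observe that if $M$ consists only of the single element $e$, then $M/e$ is empty and the statement is vacuous, so I may assume $M$ has at least two elements. In that case $M$ has no loops (a loop would give $r(\{f\}) + r(M \setminus \{f\}) = 0 + r(M) = r(M)$ with $\{f\} \neq \emptyset, M$, contradicting connectedness), and since $M$ is connected, for each $i \in \{1, \ldots, k\}$ and each element $f_i \in M_i$ (the component $M_i$ is nonempty by definition), the elements $e$ and $f_i$ lie together in some circuit $C$ of $M$.

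Next I would apply Lemma~\ref{l.tech.1} to this circuit $C$: since $e \in C$, there exists some index $j \in \{1, \ldots, k\}$ such that $C \subseteq M_j \cup \{e\}$. Now $f_i \in C$ and $f_i \in M_i$, and the components $M_1, \ldots, M_k$ are pairwise disjoint, so it must be the case that $j = i$. Setting $C_i := C$ gives a circuit of $M$ containing $e$ with $C_i \subseteq M_i \cup \{e\}$, as desired.

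There is no real obstacle here beyond checking that the element $f_i$ can indeed be chosen (nonemptiness of components) and that $e$ is genuinely distinct from $f_i$ (which holds because $f_i \in M_i \subseteq M/e$, so $f_i \neq e$); the main content is already packaged in Lemma~\ref{l.tech.1}. I would make the short argument more self-contained by noting explicitly that the existence of a circuit through any two elements of a connected matroid with at least two elements is exactly the statement that the relation ``lying in a common circuit'' has a single equivalence class, which is the definition of connectedness recalled in the preliminaries.
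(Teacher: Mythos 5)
Your proof is correct and follows essentially the same route as the paper's: invoke connectivity to obtain a circuit through $e$ and an element of $M_i$, then apply Lemma~\ref{l.tech.1} to localize that circuit inside $M_i\cup\{e\}$. Your version merely spells out a few minor points (the vacuous case, disjointness of the components forcing $j=i$) that the paper leaves implicit.
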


\begin{proof}
Fix $i=1,\ldots,k$. Since $M$ is connected, there is a circuit containing any two elements of $M$,
in particular, $M$ has a circuit $C_i$ containing the element $e$ and an element  of $M_i$.
By Lemma~\ref{l.tech.1}, the circuit $C_i$ must be a subset of $M_i\cup\{e\}$.
\end{proof}

The following lemma allows us to find an obstruction to a small branch-depth. 
We utilize this lemma to show that Algorithm \ref{a.dec} always returns a \decomp\ of depth at most $4^{\bd(M)}$. 
Figure~\ref{fig-l.circuit} contains an illustration of the notation used in the lemma.

\begin{lemma}
\label{l.circuit}
Let $M$ be a matroid. Let $e_1,\ldots,e_k$ be distinct elements of $M$ and $C_0,C_1,\dots,C_k$ subsets of $M$ such that
$$
\begin{array}{rl}
|C_i|\ge 3 &\textrm{for $i=0,\dots,k$,}\\
C_{i-1}\cap C_{i}=\{e_i\} &\textrm{for $i=1,\dots,k$,}\\
C_i \cap C_j = \emptyset &\textrm{for $|i-j|\ge 2$.} 
\end{array}
$$
Let $e_0\in C_0\setminus\{e_1\}$ and $e_{i}'\in C_{i-1}\setminus\{e_{i-1},e_{i}\}$, $i=1,\dots,k$. 
Further, set 
$$
M_i := \left\{\begin{array}{ll}
M & \textrm{for $i=0$}, \\
M_{i-1}/(C_{i-1} \setminus \{e_i,e'_i\}) & \textrm{for $i=1,\dots,k$.}
\end{array}\right.
$$
If $C_i$ is a circuit in $M_i$ for $i=0,1,\dots,k$, then $M$ contains a circuit of length at least $k+3$ containing $e_0$.
\end{lemma}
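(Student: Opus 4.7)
The plan is to proceed by induction on $k$. The base case $k=0$ is immediate: $C_0$ itself is a circuit of $M=M_0$ of length $|C_0|\geq 3 = k+3$ containing $e_0$.

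For the inductive step ($k\geq 1$), first apply the induction hypothesis to the shifted sequence $C_1,\ldots,C_k$ in matroids $M_1,\ldots,M_k$, taking $e_1\in C_1\setminus\{e_2\}$ in place of $e_0$. This yields a circuit $D$ in $M_1$ of length at least $k+2$ containing $e_1$. A short rank computation using that $C_0$ is a circuit of $M$ shows that $\{e_1,e_1'\}$ is a parallel pair (a $2$-circuit) in $M_1 = M/(C_0\setminus\{e_1,e_1'\})$. Since $|D|\geq k+2\geq 3$, we cannot have $\{e_1,e_1'\}\subseteq D$, hence $e_1'\notin D$, and the standard parallel-swap argument then gives that $D' := (D\setminus\{e_1\})\cup\{e_1'\}$ is another circuit of $M_1$ of the same length $|D|$.

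The key construction is the intermediate matroid $N := M/(C_0\setminus\{e_0,e_1,e_1'\})$, so that $M_1 = N/e_0$ and another rank computation shows $\{e_0,e_1,e_1'\}$ is a $3$-circuit in $N$. By Lemma~\ref{l.tech.decontr}, lifting $D$ from $M_1 = N/e_0$ to $N$ gives that either $D$ or $D\cup\{e_0\}$ is a circuit of $N$; the same dichotomy applies to $D'$. If either $D\cup\{e_0\}$ or $D'\cup\{e_0\}$ is a circuit of $N$, iterated application of Lemma~\ref{l.tech.decontr} lifts it from $N$ back to $M$, producing a circuit of $M$ of length at least $|D|+1 \geq k+3$ containing $e_0$, finishing the induction.

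It remains to rule out the case that \emph{both} $D$ and $D'$ are circuits in $N$. Applying (C3) in $N$ to the circuits $\{e_0,e_1,e_1'\}$ and $D$ with common element $e_1$ and $f=e_0\in\{e_0,e_1,e_1'\}\setminus D$ produces a circuit $E$ of $N$ with $e_0\in E\subseteq(\{e_0,e_1,e_1'\}\cup D)\setminus\{e_1\} = D'\cup\{e_0\}$. On the other hand, since $e_0$ is not a loop in $N$, we compute $r_N(D'\cup\{e_0\}) = r_{M_1}(D')+1 = |D'|$, so $D'\cup\{e_0\}$ has nullity one; a standard submodularity argument (two distinct circuits in a common set would force the union to have nullity at least two) shows that a set of nullity one contains a unique circuit. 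By the case assumption this unique circuit is $D'$, hence $E = D'$, contradicting $e_0\in E$ while $e_0\notin D'$. The main technical obstacle is exactly this last coordination — exploiting the $3$-circuit $\{e_0,e_1,e_1'\}$ in $N$ together with the nullity-one uniqueness of circuits to preclude the "bad" case while aligning the three matroids $M$, $N$, $M_1$ and the parallel-swap pair $D,D'$ correctly.
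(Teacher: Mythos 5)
Your proof is correct and follows the same overall skeleton as the paper's proof: induction on $k$, the auxiliary matroid $N := M/(C_0\setminus\{e_0,e_1,e_1'\})$ in which $\{e_0,e_1,e_1'\}$ is a triangle and $M_1 = N/e_0$, and a final lift from $N$ back to $M$ via Lemma~\ref{l.tech.decontr}. The key middle step, however, is handled by a genuinely different argument. The paper sets $D := C\setminus\{e_1\}$ (an independent set of size at least $k+1$) and, by a chain of submodularity estimates on $r_N$, shows directly that at least one of $D\cup\{e_0,e_1\}$ or $D\cup\{e_0,e_1'\}$ is a circuit of $N$ of length at least $k+3$. You instead keep $D$ as the circuit from the inductive call, observe that $\{e_1,e_1'\}$ collapses to a parallel pair in $M_1$, perform a parallel swap to get the twin circuit $D' = (D\setminus\{e_1\})\cup\{e_1'\}$, lift $D$ and $D'$ from $M_1 = N/e_0$ to $N$ (each lifts to itself or to itself plus $e_0$), and then use strong circuit elimination (C3) on the triangle and $D$ together with the nullity-one-uniqueness observation for $D'\cup\{e_0\}$ to exclude the remaining bad case. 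Your route is more axiomatic/combinatorial, trading the paper's rank-function computation for (C3) and a uniqueness argument; the geometric idea exploited in both — the symmetry between $e_1$ and $e_1'$ created by the triangle in $N$ — is the same, and both arguments are sound.
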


\begin{figure}
\begin{center}
\epsfbox{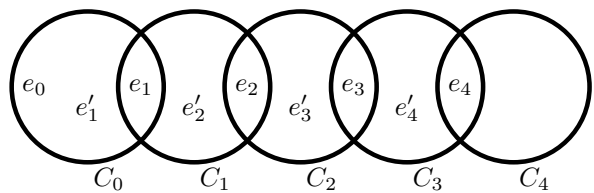}
\end{center}
\caption{The notation used in the proof of Lemma~\ref{l.circuit}.}
\label{fig-l.circuit}
\end{figure}

\begin{proof}
We prove the statement by induction on $k$. 
For $k=0$ it suffices to take the circuit $C_0$ itself. 

Let $k\ge 1$. 
By induction, $M_1 = M_0 / (C_0 \setminus \{e_1,e'_1\})$ has a circuit $C$ of length at least $k+2$ that contains $e_1$. 
Let $D=C\setminus \{e_1\}$. 
Since $C$ is a circuit, $D$ is independent in $M_1$ and thus in $M_0$. 
Also note $|D|\ge k+1$.

Let $N=M_0 / (C_0\setminus\{e_0,e_1,e'_1\})$. 
Since $C_0$ is a circuit in $M_0$, $\{e_0,e_1,e'_1\}$ is a circuit in $N$ by Lemma~\ref{l.tech.contr}. 
Furthermore, it holds that $M_1 = N/e_0$.
If $Y$ is a circuit in $N$, then there is a circuit $Y' \supseteq Y$ in $M$ by Lemma~\ref{l.tech.decontr}.
Therefore, it suffices to find a circuit of length at least $k+3$ in $N$. 

We will show that $D\cup\{e_0,e'_1\}$ or $D\cup\{e_0,e_1\}$ is a circuit in $N$. 
Since $D$ is independent in $N/e_0$, we get that $D\cup\{e_0\}$ is independent in $M$.  
We next show that
\begin{equation}
r_N(X\cup\{e_i,e_j\})=r_N(X\cup\{e_0,e_1,e'_1\})\label{eq-l.circuit}
\end{equation}
for any $e_i,e_j\in\{e_0,e_1,e'_1\}$, $e_i\ne e_j$ and for any set $X\subseteq N$.
This follows from the following application of the submodularity of the rank function:
$$
r_N(\{e_i,e_j\})+r_N(X\cup\{e_0,e_1,e'_1\})\le r_N(X\cup\{e_i,e_j\})+r_N(\{e_0,e_1,e'_1\}).
$$
Hence, for any proper subset $D' \subsetneq D$, we have
$$r_N(D'\cup\{e_0,e'_1\})=r_N(D'\cup\{e_0,e_1\})=r_{M_1}(D'\cup\{e_1\})+1 = |D'|+2,$$
where the last equality follows from the fact that $C=D \cup \{e_1\}$ is a circuit in $M_1$. 
Thus, both $D'\cup\{e_0,e'_1\}$ and $D'\cup\{e_0,e_1\}$ are independent in $N$. 
On the other hand, it also holds that
$$r_N(D\cup\{e_0,e'_1\})=r_N(D\cup\{e_0,e_1\})=r_{M_1}(D\cup\{e_1\})+1=|D|+1,$$
where the first equality follows from (\ref{eq-l.circuit}),
the second from $M_1=N/e_0$, and
the last from the fact that $C=D\cup\{e_1\}$ is a circuit in $M_1$.
Consequently, neither $D\cup\{e_0,e_1\}$ and $D\cup\{e_0,e'_1\}$ is independent in $N$.
To finish the proof,
it suffices to show that $D\cup\{e_1\}$ or $D\cup\{e'_1\}$ is independent in $N$. 
This can be shown using the submodularity of the rank function and (\ref{eq-l.circuit}) as follows:
$$
r_N(D\cup\{e_1\})+r_N(D\cup\{e'_1\})\ge r_N(D)+r_N(D\cup\{e_1,e'_1\})$$
$$=r_N(D)+r_N(D\cup\{e_0,e_1\})=2|D|+1.$$ 
The proof is now complete. 
\end{proof} 

We get the following corollary. 
\begin{corollary}
\label{l.circuit2}
Let $M$ be a matroid. 
If $C_0,C_1,\dots, C_k$ and $M_0, \ldots, M_k$ are as in Lemma \ref{l.circuit}, then the matroid $M$ contains a circuit of length at least $\sqrt{\sum_{i=0}^k |C_i|}$.
\end{corollary}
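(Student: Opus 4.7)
The plan is to extract two kinds of lower bounds on the length of a longest circuit of $M$ and then combine them via a simple arithmetic estimate. The first bound comes directly from Lemma \ref{l.circuit}, which furnishes a single circuit in $M$ of length at least $k+3$ (the circuit containing $e_0$). The second bound uses the circuits $C_i$ themselves: each $C_i$ is a circuit of $M_i$, and $M_i$ is a contraction of $M$ by the set $F_i := \bigcup_{j<i}(C_j\setminus\{e_{j+1},e'_{j+1}\})$; since contraction by a set is the composition of contractions by single elements, we may iterate Lemma \ref{l.tech.decontr} once per element of $F_i$ to lift $C_i$ to a circuit of $M$ of length at least $|C_i|$. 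Together these give that $M$ has a circuit of length at least
\[
L \;:=\; \max\bigl\{\,k+3,\ |C_0|,\ |C_1|,\ \ldots,\ |C_k|\,\bigr\}.
\]

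It then remains to verify $L\ge \sqrt{\sum_{i=0}^{k}|C_i|}$, i.e., $L^2\ge \sum_{i=0}^{k}|C_i|$. There are exactly $k+1$ terms in the sum, each at most $L$, so $\sum_{i=0}^{k}|C_i| \le (k+1)L$. Since $L\ge k+3\ge k+1$, this gives $\sum_{i=0}^{k}|C_i| \le L\cdot L = L^2$, whence $L\ge \sqrt{\sum_{i=0}^{k}|C_i|}$, as required.

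There is no real obstacle here: all of the genuine combinatorial content has already been discharged by Lemma \ref{l.circuit} (which turns the cascade of circuits into a single long circuit of $M$) and by Lemma \ref{l.tech.decontr} (which lifts each $C_i$ back through the contractions). The corollary is essentially a bookkeeping step that packages both lower bounds into the single form $\sqrt{\sum |C_i|}$, which is the shape needed for the subsequent algorithmic application.
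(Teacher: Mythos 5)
Your proof is correct and uses the same two ingredients as the paper: the circuit of length at least $k+3$ supplied by Lemma~\ref{l.circuit}, and the observation that each $C_i$, being a circuit of a contraction of $M$, lifts (via iterated application of Lemma~\ref{l.tech.decontr}) to a circuit of $M$ of length at least $|C_i|$. The paper's proof does the same thing via an explicit case split on whether $\sum_i|C_i|$ exceeds $(k+1)^2$ (and leaves the lifting of $C_i$ implicit), whereas you fold both bounds into a single maximum $L$ and observe $\sum_i|C_i|\le (k+1)L\le L^2$; this is only a cosmetic repackaging of the same arithmetic, though your version is a little cleaner and more explicit about the need to lift the $C_i$ back to $M$.
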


\begin{proof}
Let $t:=\sum_{i=0}^k |C_i|$. 
If $t \le (k+1)^2$, then by Lemma \ref{l.circuit} there is a circuit of length at least $k+3> \sqrt{t}$. 
On the other hand, if $t > (k+1)^2$, then there exists $i\in\{0,1,\dots,k\}$ such that $|C_i|\ge \frac{t}{k+1} > \sqrt{t}$.
\end{proof} 

\subsection{Approximating branch-depth}
\label{s.approximatingbd} 

We now present our polynomial-time algorithm for constructing a depth-de\-compo\-si\-tion of an oracle-given matroid $M$ with depth at most $4^{\mathrm{bd}(M)}$.
The pseudocode is given as Algorithm \ref{a.dec} in the form of a routine taking three parameters: a connected matroid $M$, one of its circuits $C$, and a non-loop element $e \in C$. 
For disconnected matroids we process the components individually and glue the resulting depth-de\-compo\-si\-tions by identifying their roots. 
Note that every connected matroid has a circuit and a non-loop element unless $|M|=1$. 

If the rank of $M$ is at most one,
which can be easily determined by checking the existence of a two-element independent set,
the routine returns the trivial \decomp,
which is either one-vertex or two-vertex rooted tree
with all the matroid elements mapped to the root or the non-root leaf, respectively,
depending on the rank of $M$.
Assume $r(M) \ge 2$.
If $|C| \le 2$, we find another circuit containing $e$ of size at least three.
The existence of such circuit in a connected matroid with rank at least two is implied by the definition of connectivity, and
it can be found in a polynomial time for example as follows.
Find using the greedy algorithm a base $B$ of $M$ not containing any element parallel to $e$ and remove from $B$ all the elements $f$ such that
$B\cup\{e\}\setminus\{f\}$ is not independent. The resulting set together with $e$ forms a circuit of length at least three.

If $|C| \ge 3$, we proceed by contracting $e$ in $M$ and analyzing the resulting matroid. 
If the resulting matroid is connected,
Algorithm~\ref{a.dec} calls itself recursively (Step 3 of Algorithm \ref{a.dec}) on the contracted matroid. 
Note that the connectivity of a matroid can be tested in polynomial time using the matroid intersection algorithm;
this algorithm can also be used to find the connected components if the matroid is not connected.
In case that the matroid is connected,
the recursive call is made for the contracted matroid, the circuit $C \setminus \{e\}$, and
an arbitrary element $e_1$ of $C\setminus\{e\}$.
Note that $e_1$ is not a loop in the contracted matroid since $|C|\ge 3$.
After the call is finished, we alter the resulting decomposition by adding a new vertex that becomes the root. 

If $M/e$ is not connected, the recursive calls are performed on each component separately (Step 4 and Step 5).
For the unique component containing $C\setminus\{e\}$ (see Lemma~\ref{l.tech.1}),
the call is performed with the component, the circuit $C \setminus \{e\}$, and
an arbitrary element $e_1$ of $C\setminus\{e\}$.
For other components,
the call is performed for $C'\setminus\{e\}$ where $C'$ is an arbitrary circuit of the original matroid
that contains $e$ and an element of the component;
the element of $C'\setminus\{e\}$ to perform the call with is chosen arbitrary.
The resulting decomposition is obtained by identifying the roots of the individual decompositions and
adding a new vertex that becomes the root of the whole decomposition.

It is easily verified that Algorithm~\ref{a.dec} finishes in time polynomial in the number of elements of the input matroid: 
if the recursive call in Step 2 is executed,
the next execution avoids this step and
performs one of the other recursive calls, which in turn lead to a decrease in the input size. 
If Step 3 is reached, only a single recursive call is made by the routine and the number of matroid elements is decreased by one. 
If Steps 4 and 5 are reached,
then the number of recursive calls equals the number of connected components and
the sum of the numbers of elements of the matroids passed to the calls
is one less than the number of elements of the original matroid.
It is easy to establish that the number of recursive calls
is at most quadratic in the number $n$ of elements of the original matroid, and
a more refined analysis can yield that the number of recursive calls is at most $O(n\log n)$.

We next establish that Algorithm~\ref{a.dec} produces a depth-decomposition of the input matroid. 

\begin{algorithm}
  \label{a.dec}
  \textbf{Algorithm \ref{a.dec}:} $\mbox{construct}(M,C,e)$ \\ 
  \KwIn{a connected matroid $M$, a circuit $C$ of $M$, and a non-loop element~$e\in C$}
  \KwOut{a \decomp\ of $M$}   
  \SetArgSty{textbb}
  \eIf{$r(M)=0$}{
	\nlset{Step 0} 
	\Return one-vertex tree with $f$ mapping all elements to the root\;
  } 
  {
    \eIf{$r(M)=1$}{
	\nlset{Step 1}
	\Return one-edge tree with $f$ mapping all elements to the leaf\;
   }
    {
      \eIf{$|C|=2$}{
	\nlset{Step 2} 
	choose a circuit $C'$ satisfying $|C'| \ge 3$ and $e \in C'$\; \Return $\mbox{construct}(M,C',e)$\;}
      {
        \eIf{$M/e$ is connected}
        {
          \nlset{Step 3}
          choose an element $e_1\in C\setminus \{e\}$\;
	  $(T',f'):=\mbox{construct}(M/e,C\setminus\{e\},e_1)$\;
          $T:=(V(T')\cup \{v\},E(T')\cup \{vv'\})$ where $v'$ is the root of $T'$\;
	  root $T$ at $v$\;
          $f(e) := v'; f(e') := f'(e')$ for $e' \ne e$\;
          \Return $(T,f)$\;
        }
        {
          \nlset{Step 4}
          \For{the component $M_0$ of $M/e$ containing $C\setminus e$}
          {
            choose an element $e_0\in C\setminus \{e\}$\;
	    $(T_0,f_0):=\mbox{construct}(M_0,C\setminus\{e\},e_0)$\;
          }
          \nlset{Step 5}
          \For{each component $M_i$ of $M/e$ disjoint from $C$}
          {
            choose a circuit $C_i$ of $M$ contained in $M_i\cup \{e\}$ that contains $e$\; 
            choose $e_i \in C_i\setminus \{e\}$\;
	    $(T_i,f_i) := \mbox{construct}(M_i,C_i\setminus\{e\},e_i)$\;
          }
          identify all the roots $v_i$ of $T_i$ into a single root $v'$, obtaining $T'$\;
          $T:=(v(T')\cup \{v\},E(T')\cup \{vv'\})$, root $T$ at $v$\;
          $f(e) := v', f(e_i) := f_i(e_i)$ for $e_i \in M_i$\;
          \Return $(T,f)$\;
        }
      }
    }
  }  
\end{algorithm}

\begin{lemma} 
Algorithm \ref{a.dec} returns a valid \decomp\ of $M$. 
\end{lemma}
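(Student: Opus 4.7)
The plan is to proceed by strong induction on $|M|$, verifying both parts of Definition~\ref{d.decomp} for the pair $(T,f)$ returned by $\mbox{construct}(M,C,e)$. Steps~0 and~1 are immediate: if $r(M)\le 1$, the returned tree has $\|T\|=r(M)$ and $r(X)\le 1\le\|T^*(X)\|$ for every nonempty $X$. Step~2 does not shrink $|M|$, but it invokes itself with a larger circuit $C'$; the second call triggers Steps~3, 4, or~5, so its correctness follows once those cases are settled.

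For Step~3, since $|C|\ge 3$, Lemma~\ref{l.tech.contr} ensures that $C\setminus\{e\}$ is a circuit of $M/e$ whose elements are all non-loops, so the recursive call respects the input conventions. By the inductive hypothesis, $(T',f')$ is a depth-decomposition of $M/e$, and since $e$ is not a loop we get $r(M)=r(M/e)+1=\|T'\|+1=\|T\|$. For the rank inequality, the key observation is that for nonempty $X\subseteq M$ the subtree $T^*(X)$ equals $(T')^*(X\setminus\{e\})$ together with the new edge $vv'$, because $f(e)=v'$ is the root of $T'$ and every path from $v$ into $T'$ uses $vv'$. Combining the identity $r_M(X)=r_{M/e}(X\setminus\{e\})+1$ when $e\in X$ and the inequality $r_M(X)\le r_{M/e}(X)+1$ when $e\notin X$ with the inductive bound on $(T')^*$ yields $r_M(X)\le\|T^*(X)\|$.

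Steps~4 and~5 treat the case when $M/e$ splits into components $M_0,\ldots,M_k$. By Lemma~\ref{l.tech.1}, $C\setminus\{e\}$ lies in a single component, which we label $M_0$; for every other component, Lemma~\ref{l.circuit3} furnishes a circuit $C_i$ of $M$ with $e\in C_i\subseteq M_i\cup\{e\}$, and then $C_i\setminus\{e\}$ is a circuit of $M_i$ by Lemma~\ref{l.tech.contr}. Applying induction to each component yields depth-decompositions $(T_i,f_i)$. After merging the roots into $v'$ and attaching a new root $v$ via the edge $vv'$, condition~(1) reads $\|T\|=1+\sum_i\|T_i\|=1+\sum_i r(M_i)=1+r(M/e)=r(M)$. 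Writing $X_i=X\cap M_i$, one has $\|T^*(X)\|=1+\sum_i\|T_i^*(X_i)\|$ for nonempty $X$, so combining the component identity $r_{M/e}(Y)=\sum_i r_{M_i}(Y\cap M_i)$ with the rank relation between $M$ and $M/e$ used in Step~3 gives the desired inequality.

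The main obstacle is the bookkeeping around degenerate recursive calls: some components $M_i$ may consist of a single loop arising from a two-element circuit $\{e,x\}$, so the chosen $e_i$ is itself a loop and narrowly violates the stated input convention. Such calls are harmless because $r(M_i)=0$ triggers Step~0 and returns the trivial tree, contributing $0$ to both sides of both depth-decomposition conditions. The other delicate point is ensuring that the $+1$ coming from the new edge $vv'$ is charged exactly once in the rank inequality, which is precisely what the non-loop status of $e$ provides.
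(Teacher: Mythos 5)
Your proof is correct and follows essentially the same approach as the paper: verify conditions~(1) and~(2) of Definition~\ref{d.decomp} by induction (you use $|M|$; the paper counts recursive calls, but Step~2 is dispatched identically), using the identity $r_M(X\cup\{e\})=r_{M/e}(X\setminus\{e\})+1$ and the structure of $T^*(X)$ in Steps~3--5. One small slip: in Step~0 a nonempty $X$ has $\|T^*(X)\|=0$, not $\ge 1$, so the correct bound there is $r(X)=0=\|T^*(X)\|$; this does not affect the argument, and your observation about loop components in Step~5 falling harmlessly into Step~0 is a reasonable extra sanity check that the paper leaves implicit.
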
 

\begin{proof}
Let $M$ be the input matroid and $(T, f)$ the output of the Algorithm \ref{a.dec}.
Clearly, $T$ is a tree and $f$ a mapping from $M$ to $V(T)$.
Thus, we need to verify the two conditions from Definition \ref{d.decomp}.
We start with the condition (1), and verify it by induction on the number of recursive calls.
If Step 0 or 1 is reached, the tree $T$ clearly satisfies $r(M)=\|T\|$.
In Step 2, the algorithm is recursively evoked to a matroid of the same rank and
the returned tree $T$ has the number of edges equal to $r(M)$ by induction.
In Step 3, the routine is recursively called to a matroid with rank one smaller and
the returned tree is extended by a single edge; consequently, it also holds $r(M)=\|T\|$.
Finally, in Steps 4 and 5, the sum of the ranks of the matroids that the routine is called to is one smaller than $r(M)$ and
thus the output tree $T$ has $r(M)$ edges in this case, too.

We next establish the condition (2) from Definition \ref{d.decomp}, and
we again proceed by induction on the number of recursive calls.
Let $X$ be a non-empty subset of $M$; our aim is to show that $r(X)\le \|T^*(X)\|$.
If the depth-decomposition is constructed in Step 0 or 1, then the condition (2) clearly holds.
If the algorithm reached Step 2, the returned depth-decomposition is not modified and
the condition (2) holds by induction.
Suppose that the depth-decomposition was constructed in Step 3, i.e.,~the matroid $M / e$ is connected. 
From the induction, we get $r_{M/e}(X \setminus \{e\}) \le \|T^*(X)\|-1$,
since $T^*(X)$ includes one additional edge compared to the corresponding subtree of $T'$.
It follows that
$$r_M(X) \le r_M(X\cup\{e\})=r_{M/e}(X \setminus \{e\})+1\le \left\|T^*(X)\right\|.$$

It remains to analyze the case that
the depth-decomposition is constructed in Steps 4 and 5,
i.e., the case that the matroid $M / e$ is has components $M_0, \ldots, M_k$. 
By induction, we get $r_{M_i}(X \cap M_i) \le \left\|{T_i}^*(X \cap E(M_i))\right\|$, where $T_i$ is the depth-decomposition of $M_i$ returned by the recursive call for $M_i$. 
Since the resulting depth-decomposition is constructed by identifying the roots of $T_1, \ldots, T_k$ and connecting them to the new root, we get
$$r_M(X) \le r_M(X\cup\{e\})= 1 + \sum_{i=0}^k r_{M_i}(X \cap M_i) \le 1 + \sum_{i=0}^k \left\|T^*(X \cap M_i)\right\| = \left\|T^*(X)\right\|.$$
\qedhere
\end{proof}

We next analyze the depth of the tree returned by Algorithm~\ref{a.dec}.

\begin{lemma}
\label{l.algdepth}
Algorithm \ref{a.dec} returns a \decomp\ of $M$ with depth at most $4^{\mathrm{bd}(M)}$.
\end{lemma}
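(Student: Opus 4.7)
The plan is to show that whenever Algorithm~\ref{a.dec} returns a tree $T$ of depth $D$, the matroid $M$ must contain a circuit of length at least $\sqrt{D}$. Proposition~\ref{p.circuit} then forces $\mathrm{bd}(M)\ge \log_2\sqrt{D}=\frac{1}{2}\log_2 D$, which is equivalent to the desired bound $D\le 4^{\mathrm{bd}(M)}$.

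To locate such a long circuit I would fix a deepest root-to-leaf path $P$ in $T$ and trace the recursion along it, partitioning $P$ into \emph{arcs}. An arc is a maximal sequence of consecutive calls each of which takes either Step~3 or the Step~4 branch of the Steps~4/5 block; a new arc begins after a Step~2 call (the current circuit has shrunk to size two and a fresh circuit of size at least three is chosen in the same matroid) or after a Step~5 call in which $P$ descends into a component that does not contain $C\setminus\{e\}$ (a new circuit is chosen in that component). Let $C_0,\ldots,C_k$ denote the circuits at the beginnings of the successive arcs, all of size at least three by construction, and let $\ell_i$ be the number of contractions performed in arc~$i$. Since the processed circuit always has size at least three inside an arc, $\ell_i\le |C_i|-2$, and together with the at-most-one depth contributed by the final Step~0 or Step~1 base case this yields
\[
D\;\le\;1+\sum_{i=0}^{k}\ell_i\;\le\;1+\sum_{i=0}^{k}(|C_i|-2)\;\le\;\sum_{i=0}^{k}|C_i|.
\]

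Next I would set $e_i$ to be the element of $C_{i-1}$ carried into $C_i$ at the transition (the surviving $e$-parameter in case of a Step~2 transition, or the element contracted by the terminating Step~5 call in case of a Step~5 transition) and $e_i'\in C_{i-1}\setminus\{e_{i-1},e_i\}$ another surviving element of $C_{i-1}$; then put $M_0=M$ and $M_i:=M_{i-1}/(C_{i-1}\setminus\{e_i,e_i'\})$. A short case analysis based on Lemma~\ref{l.tech.contr}, Lemma~\ref{l.tech.1}, and the minimality of circuits would verify that $C_i$ is a circuit of $M_i$ and that $C_{i-1}\cap C_i=\{e_i\}$: in Step~2 transitions the newly chosen circuit must contain $e_i$ but cannot contain the other surviving element $e_i'$ by minimality; in Step~5 transitions the new circuit lies in a component disjoint from the remainder of $C_{i-1}$, so again only the just-contracted element is shared. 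Once the hypotheses of Lemma~\ref{l.circuit} are in place, Corollary~\ref{l.circuit2} delivers a circuit of $M$ of length at least $\sqrt{\sum_i|C_i|}\ge \sqrt{D}$, completing the argument.

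The main technical obstacle will be the global disjointness $C_i\cap C_j=\emptyset$ for $|i-j|\ge 2$ demanded by Lemma~\ref{l.circuit}: the retained element $e_i'$ is never explicitly contracted along $P$ and could in principle reappear in a later circuit $C_j$. Handling this cleanly will require either choosing each $e_i'$ adaptively so that it avoids all later circuits along $P$, or replacing $C_0,\ldots,C_k$ by a judiciously chosen subsequence of arcs for which the disjointness holds while $\sum_i|C_i|$ remains at least $D$; every other ingredient of the proof follows immediately from the structural lemmas of Section~\ref{s.technical}.
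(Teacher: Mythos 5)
Your plan reproduces the paper's argument almost verbatim: trace a deepest root-to-leaf path, split the sequence of recursive calls into maximal runs (your "arcs") in which the working circuit only shrinks, collect the circuits at the starts of the runs, bound the path length by $\sum_i|C_i|$, and feed these circuits into Lemma~\ref{l.circuit} / Corollary~\ref{l.circuit2}. Your description of the Step~2 and Step~5 transitions, the choice of the joining elements $e_i$, and the inequality $\ell_i\le |C_i|-2$ all match the paper's proof.

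The one place where you diverge is in your treatment of the disjointness condition $C_i\cap C_j=\emptyset$ for $|i-j|\ge 2$. You flag this as the main obstacle and suggest either choosing $e_i'$ adaptively or passing to a subsequence of arcs. Neither device is necessary, and in fact the latter would degrade the bound $\sum_i|C_i|\ge D$. The disjointness holds automatically for structural reasons. Within an arc, every element of the starting circuit other than $e_{i+1}$ and $e_{i+1}'$ is contracted by the time the arc ends, so those elements cannot lie in any later circuit. What remains to rule out is exactly the leftover element $e_{i+1}'$. In a Step~2 transition the arc ends with a two-element circuit $\{e_{i+1},e_{i+1}'\}$, so $e_{i+1}'$ is parallel to $e_{i+1}$; moreover the new circuit chosen in Step~2 is built from a base avoiding elements parallel to $e_{i+1}$, so $e_{i+1}'\notin C_{i+1}$, and once $e_{i+1}$ is contracted at the first step of the next arc, $e_{i+1}'$ becomes a loop and can never again occur in a circuit of size at least two. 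In a Step~5 transition, $e_{i+1}'$ (and in fact every uncontracted element of $C_i$ other than $e_{i+1}$) lies in a component of $M/e_{i+1}$ different from the one the algorithm recurses into, so all subsequent circuits on the traced path are disjoint from it by Lemma~\ref{l.tech.1}. Once you make these two observations explicit, the hypotheses of Lemma~\ref{l.circuit} follow directly and the rest of your argument goes through exactly as in the paper.
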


\begin{proof} 
Let $d$ be the depth of the \decomp\ $T$ returned by the algorithm for a matroid $M$. 
Let $r=v_0,v_1,\dots,v_d$ be a path in $T$ of length $d$ from the root to one of the leaves. 
It is easy to see that each vertex of $T$ that is not a leaf is the root of some subtree of $T$ during the execution of the algorithm. 
For $i=0,1,\dots,d-1$,
let $(M_i,C_i,e_i)$ be the matroid together with a circuit and an element of it such that
the algorithm creates the subtree rooted at $v_i$ and containing $v_{i+1}$ in the call with the parameters $(M_i,C_i,e_i)$.
Note that $M_0=M$ and $C_0=C$. In addition, $r(M_{d-1})=1$ and $|C_{d-1}|=2$.

For every $i=0,\dots,d-2$, precisely one of the following five cases occurs depending
whether the matroid $M_{i+1}$ was passed to the recursive call in Step 3, 4 or 5
during the execution of the call with the parameters $(M_i,C_i,e_i)$ and
whether the recursive call reached Step 2.
\begin{itemize}
\item $M_{i+1}=M_i/e_i$, $C_{i+1}=C_{i}\setminus \{e_i\}$ (Step 3),
\item $M_{i+1}=M_i/e_i$, $|C_i|=3$, $|C_{i}\cap C_{i+1}|=1$ (Step 3 followed by Step 2),
\item $M_{i+1}$ is a component of $M_i/e_i$, $C_{i+1}=C_i\setminus \{e_i\}$ (Step 4),
\item $M_{i+1}$ is a component of $M_i/e_i$, $|C_i|=3$, $|C_{i}\cap C_{i+1}|=1$ (Step 4 followed by Step 2),
\item $M_{i+1}$ is a component of $M_i/e_i$, $C_{i+1}\cap C_{i} = \emptyset$, but $C_{i+1}\cup \{e_i\}$ is a circuit in $M_i$ (Step 5).
\end{itemize} 
The sequence $C_0,\ldots,C_{d-1}$ consists of several runs where the next set is a subset of the preceding one;
the runs correspond to Steps 3 or 4.
Each run except for the last one is finished either by Step 3 or 4 followed by Step 2, or by Step 5.
Let $j_0,\ldots,j_k$ be the indices of the elements where the run starts,
i.e., the $i$-th run contains the elements with indices between $j_{i-1}$ and $j_i-1$ (inclusively).
Note that $j_0=0$, and set $\hat{C}_0 := C_{j_0} = C_0$ and $\hat{e}_0 := e_0$.
If the $i$-th run is finished by Step 5, 
let $\hat{C}_i := C_{j_i} \cup \{e_{j_i-1}\}$, $\hat{e}_i := e_{j_i-1}$ and $\hat{e}'_i$ any element of $C_{j_i-1}$ different from $e_{j_i-1}$.
If the $i$-th run is finished by Step 3 or 4 followed by Step 2,
let $\hat{C}_i := C_{j_i}$, let $\hat{e}_i$ be the unique element in $C_{j_i}\cap C_{j_i+1}$ and
let $\hat{e}'_i$ be the element of $C_{j_i-1}$ different from $e_{j_i-1}$ and from $\hat{e}_i$.

The circuits $\hat{C}_0,\ldots, \hat{C}_k$ together with the elements $\hat{e}_0,\ldots,\hat{e}_k$ and
the elements $\hat{e}'_1,\ldots,\hat{e}'_k$ fulfil the conditions of Lemma~\ref{l.circuit2}.
Since each $C_{j_i}$ can be contracted at most $|C_{j_i}|-2$ times before Step 2 or 5 is reached,
we have $\sum_{i=0}^k |\hat{C}_i| \ge d$.
By Corollary~\ref{l.circuit2}, $M$ has a circuit of length at least $\sqrt{\sum_{j=0}^k |\hat{C}_i|}\ge\sqrt{d}$.  
We conclude that $\bd(M) \ge \frac12\log_2 d$ by Proposition \ref{p.circuit}.
\end{proof} 

As a corollary of the above analysis, we get the following upper bound on the branch-depth of a matroid $M$.

\begin{corollary} 
The branch-depth of a finite matroid $M$ is at most $\ell^2$, where $\ell$ is the size of the largest circuit of $M$. 
\end{corollary}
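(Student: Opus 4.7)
The plan is to observe that this bound is already latent in the analysis behind Lemma~\ref{l.algdepth}; one only needs to stop short of the final invocation of Proposition~\ref{p.circuit} in that proof. Concretely, let $(T,f)$ denote the depth-decomposition of $M$ produced by Algorithm~\ref{a.dec}, and let $d$ be the depth of $T$. The preceding lemma guarantees that $(T,f)$ is a valid depth-decomposition, so by Definition~\ref{d.decomp} we have $\bd(M)\le d$. It therefore suffices to argue that $d\le \ell^2$.

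For this, I would quote verbatim the construction from the proof of Lemma~\ref{l.algdepth}: picking a root-to-leaf path of length $d$ in $T$, one reads off from the recursive calls the matroids $M_0,\ldots,M_{d-1}$, circuits $C_0,\ldots,C_{d-1}$, and elements $e_0,\ldots,e_{d-1}$, and then groups these into the ``runs'' delimited by Steps~2 and~5 to produce circuits $\hat{C}_0,\ldots,\hat{C}_k$ together with the distinguished elements $\hat{e}_i$ and $\hat{e}'_i$. Exactly as in that proof, these data satisfy the hypotheses of Lemma~\ref{l.circuit}, and one verifies $\sum_{i=0}^k |\hat{C}_i|\ge d$ by noting that each circuit in a run contributes at least one edge of the path per contraction step before Step~2 or~5 is reached.

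Applying Corollary~\ref{l.circuit2} to this configuration then produces a circuit of $M$ of length at least $\sqrt{\sum_{i=0}^k |\hat{C}_i|}\ge \sqrt{d}$. Hence $\ell\ge \sqrt{d}$, i.e., $d\le \ell^2$, and combining with $\bd(M)\le d$ gives $\bd(M)\le \ell^2$.

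There is no real obstacle here: the entire argument is a reorganisation of the proof of Lemma~\ref{l.algdepth}, stopped one step earlier so that the circuit length bound $\sqrt{d}\le \ell$ is used directly rather than being converted via Proposition~\ref{p.circuit} into the weaker logarithmic bound on $\bd(M)$. The only thing to be careful about is to phrase the proof so that it does not sound circular, namely by making clear that we rely on the analysis of the depth $d$ of the algorithm's output, and not on the value $\bd(M)$ it is being compared to.
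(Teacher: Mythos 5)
Your proposal is correct and follows essentially the same route as the paper: both apply Algorithm~\ref{a.dec}, reuse the notation and analysis of Lemma~\ref{l.algdepth} to extract a circuit of $M$ of length at least $\sqrt{d}$, conclude $d\le\ell^2$, and combine with the fact that the algorithm's output is a valid depth-decomposition of depth $d$ to get $\bd(M)\le\ell^2$. Your write-up is a bit more explicit about the step $\bd(M)\le d$, which the paper leaves implicit, but there is no substantive difference.
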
 

\begin{proof}
Apply Algorithm~\ref{a.dec} to $M$ and keep the notation of the proof of Lemma~\ref{l.algdepth}.
Since $M$ has a circuit of length at least $\sqrt{\sum_{j=0}^k |C_{i_j}|}\ge\sqrt{d}$,
it follows that $d\le\ell^2$. Hence, the branch-depth of $M$ is at most $\ell^2$.
\end{proof}

We obtain another corollary from the presented analysis of Algorithm~\ref{a.dec}.

\begin{corollary} 
\label{c.nicedec} 
There is a depth-decomposition $(T,f)$ and a base $B$ of $M$ such that $f|_B$ is a bijection 
between $B$ and the non-root vertices of $T$, and $f(e)$ is a leaf for every $e\not\in B$. 
Furthermore, the depth of $T$ is at most $4^{\bd(M)}$.
\end{corollary}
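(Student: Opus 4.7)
My plan is to apply Algorithm~\ref{a.dec} to $M$, extract the desired base $B$ from its execution trace, and then lightly post-process the mapping to enforce the leaf condition for non-base elements. The depth bound $4^{\bd(M)}$ is already supplied by Lemma~\ref{l.algdepth}, so the remaining task is purely structural: exhibit a base $B$ that bijects under $f$ with the non-root vertices of $T$, with every element outside $B$ landing at a leaf. Since $T$ has exactly $r(M)$ edges by condition~(1) of Definition~\ref{d.decomp}, the number of non-root vertices equals $|B|$ whenever $|B|=r(M)$, so producing a bijection reduces to placing $B$ at distinct non-root vertices.

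I would define $B$ by induction on the recursion of Algorithm~\ref{a.dec}. In Step~0 set $B := \emptyset$; in Step~1 pick any non-loop of $M$ and let $B$ be that singleton; in Step~2 reuse the base produced by the nested call. In Step~3, given the base $B'$ returned for $M/e$, set $B := B' \cup \{e\}$; since $e$ is a non-loop, $|B|=r(M/e)+1=r(M)$ and $B$ remains independent in $M$. In Steps~4 and~5, given bases $B_0,\ldots,B_k$ for the components $M_0,\ldots,M_k$ of $M/e$, set $B := \{e\} \cup \bigcup_i B_i$; the additive rank identity $r(M/e)=\sum_i r(M_i)$ across components, together with $e$ being a non-loop, again yields $|B|=r(M)$ and independence in $M$. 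In every case $B$ is therefore a base.

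For the bijection, I would again induct on the recursion: in Step~3 the element $e$ is placed at the child $v'$ of the new root, and the inductive bijection between $B'$ and the non-root vertices of $T'$ extends to a bijection between $B$ and the non-root vertices of $T$; Steps~4--5 work analogously using the identified root $v'$. The only subtle point is condition~(ii): loops of $M$ placed at the single vertex of a Step~0 call become non-leaves once that vertex is identified inside an enclosing Step~4 or~5 call, so the decomposition returned by the algorithm does not directly satisfy the leaf condition. I would resolve this by applying the argument of Lemma~\ref{l.refined} restricted to the elements of $M\setminus B$: move each such element from its current vertex to an arbitrary leaf descendant. Moving elements strictly downward in $T$ only enlarges each subtree $T^*(X)$ and hence preserves condition~(2) of Definition~\ref{d.decomp}; it leaves $f|_B$ unchanged, so the bijection persists; and by construction every $e\notin B$ now lies at a leaf. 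Handling this leaf-condition bookkeeping for loops created by contraction is the one technical point I expect, and it is dispatched by the downward remap rather than by any deeper argument.
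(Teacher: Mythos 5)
Your proposal follows the same overall strategy as the paper: run Algorithm~\ref{a.dec}, extract a base $B$ from the recursion (in Step~3 by adjoining the contracted element $e$ to the base of $M/e$, in Steps~4--5 by adjoining $e$ to the union of the bases of the components of $M/e$), and read off the bijection with the non-root vertices from the structure of $T$. The depth bound comes from Lemma~\ref{l.algdepth}, exactly as you do.

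You are, however, more careful than the paper on one point, and your caution is warranted. The paper asserts that the mapping $f$ returned by Algorithm~\ref{a.dec} already sends every non-base element to a leaf. But a singleton loop component of $M/e$, which arises whenever $M$ has an element parallel to $e$, is handled by Step~0: its element is mapped to the single vertex of a one-vertex tree, which is then identified into $v'$ in Step~5, and $v'$ acquires children from the other (non-trivial) components — most notably $M_0$. So that loop lands at a non-leaf vertex while not belonging to $B$, contradicting the claim as stated. Your repair — reassign each $e\notin B$ to an arbitrary leaf descendant of $f(e)$, exactly the downward move from Lemma~\ref{l.refined}, which keeps $T$ and $f|_B$ fixed and can only enlarge $T^*(X)$ — is correct and cleanly closes this gap. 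In short: same route as the paper, plus a small but genuine patch the paper omits.
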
 

\begin{proof}
We prove that the pair $(T,f)$ returned by Algorithm~\ref{a.dec} satisfies the statement.
The depth of $T$ is at most $4^{\bd(M)}$ by Lemma~\ref{l.algdepth}, and
we prove the existence of $B$ by induction on the number of recursive calls.
Let $C$ and $e$ be the parameters used to execute Algorithm~\ref{a.dec}.
If the algorithm constructed $(T,f)$ in Step 0 or 1, then the statement is obvious.
If $(T,f)$ is constructed in Step 2, the existence of $B$ follows from induction.
If $(T,f)$ is constructed in Step 3, the base from the recursive call together with $e$ forms a base $B$ of $M$.
Finally, if  $(T,f)$ is constructed in Steps 4 and 5,
the bases from the recursive calls together with $e$ form a base $B$ of $M$.
Observe that in the last two cases
the elements of the base $B$ are mapped by $f$ to different non-root vertices, and
all the other elements are mapped by $f$ to leaves.
\end{proof}

\section{Limits of representable matroids} 
\label{sect-modeling}

This section is devoted to the proof of Theorem~\ref{thm-branchdepth}. 
We first describe an encoding of first-order properties of a matroid in a rooted forest and we then employ Theorem \ref{limForests} 
to find a limit modeling.

Let $q$ be a fixed prime power, and let $M$ be a finite matroid which is representable over a finite field $\F_q$. 
We identify the elements of $M$ with the corresponding vectors. 
Let $d=\bd(M)$ be the branch-depth of $M$. 
Recall that by Corollary \ref{c.nicedec} there exists a depth-decomposition 
$(T,f)$, where $T$ has depth at most $D:=4^d$,
such that 
there is a base $B$ of $M$ with $f|_B$ being a bijection between $B$ and 
the non-root elements of $T$, and where $f(e)$ is a leaf for every $e\not\in B$.

We now construct a vertex-colored forest $F=F(M)$ from $(T,f)$ 
such that 
each component of $F$ is a rooted tree of depth at most $D$,   
the colors are $D$-tuples of elements of the finite field $\F_q$,   
and there is a bijection between the vertices of $F$ and the elements of $M$.  
The construction proceeds as follows. 
For every non-root vertex $v$ of $T$, let $b_v$ be the unique element of the base $B$ that is mapped by $f$ to $v$. 
Let $v\in L(T)$ be a leaf of $T$, say of depth $t$, and let $P(v)=(v_0,v_1,v_2,\ldots,v_t)$ 
be the unique path from the root $v_0$ to $v_t=v$. 
Every element  $e\not\in B$ that is mapped by $f$ to $v$ is in the closure of $b_{v_1},\ldots,b_{v_t}$
since $r_M(\{e,b_{v_1},\ldots,b_{v_t}\})\le \|T^*(\{v_1,\ldots,v_t\}\|=t$.
Consequently,
if matroid elements are viewed as vectors, $e$ is contained in the linear hull of $b_{v_1},\ldots,b_{v_t}$, and
it can be expressed as their linear combination, i.e., $e=\sum_{i=1}^t \alpha_i b_{v_i}$. 
For every such $e\in f^{-1}(v_t)\setminus B$, 
we attach a new leaf with its parent being $v_t$ and color it with the color 
$(\alpha_1,\ldots,\alpha_t,0,\ldots,0)$. 
Each vertex of the original tree $T$ is colored with the $d$-th unit vector $e_d$, where $d$ is its depth in $T$. 

We next delete the root and obtain a rooted forest with subtrees $T_1,\ldots, T_\ell$.  
Each of the trees $T_1,\ldots, T_\ell$ has height at most $D$. 
We call the resulting forest $F=F(M)$ with coloring $c_M:V(F)\to \F_q^D$ a {\em forest representation of $M$}. 
It follows from the construction that there is a bijection $g$ between the elements of $M$ and the vertices of $V(F)$. 

A forest representation of $M$ is not unique since there can be several different depth-decompositions of $M$. 
However, a forest representation $(F,c)$ uniquely determines the matroid. 
The elements of the matroid are the vertices of $F$.  
A set of $k$ vertices $\{v_1,\ldots,v_k\}$ is independent if the following holds.  
For $1\leq i \leq k$, let $d_i$ be the depth of $v_i$ in $F$, 
and let $P(v_i)=(w_1^{(i)},\ldots,w_{d_i+1}^{(i)})$ be the unique path from the root of the tree containing $v_i$ 
to $v_i=w_{d_i+1}^{(i)}$. We associate $v_i$ with a formal sum  
$\sum_{j=1}^{d_i} \alpha_j^{(i)} w_j^{(i)}$, 
where $(\alpha_1^{(i)},\ldots,\alpha_{d_i}^{(i)},0,\ldots,0) = c(v_i)$ is the color of $v_i$. 
The set $\{v_1,\ldots,v_k\}$ is linearly independent if and only if there exists no non-trivial $k$-tuple $(x_1,\ldots,x_k)\in \F_q^k$ 
such that 
$$ x_1\cdot \left(\sum_{j=1}^{d_1} \alpha_j^{(1)} w_j^{(1)}\right) + \cdots + x_k\cdot \left(\sum_{j=1}^{d_k} \alpha_j^{(k)} w_j^{(k)}\right) = 0.$$
Note that whether $\{v_1,\ldots,v_k\}$ is independent is determined by the subforest 
$F^\ast=F^\ast(v_1,\ldots,v_k)$, which is formed by the union of the paths $P(v_i)$, and by the colors $c(v_1),\ldots,c(v_k)$. 

Let $\lambda_D$ be the language describing rooted forests
with vertices colored with 
$\F_q^D$, i.e., $\lambda_D$ contains a binary relation representing the parent relation 
and $q^D$ unary symbols for the colors of the vertices. 
We now introduce an interpretation scheme $\bI$ of exponent one of
$\lambda_M$-structures (matroids) in $\lambda_D$-structures (vertex-colored rooted forests).
Let $\Theta(x)\;\equiv\; x=x$ and $\theta_E(x,y)\;\equiv\; x=y$.
For $k\geq 1$, we define $\theta_k\in \mathrm{FO}_k(\lambda_D)$ to encode the $k$-ary independence 
operator $I_k$ in the following way. 
Let $v_1,\ldots,v_k$ be vertices in $F$. 
For every $1\leq i\leq k$, let $P(v_i)$ and $c(v_i)$ be defined as above. 
Since each path $P(v_i)$ has length at most $D$, 
and since the number of colors is at most $q^D$, 
there are finitely many possibilities how $F^*(v_1,\ldots,v_k)$ can look.
The subforest $F^*(v_1,\ldots,v_k)$ completely determines
whether there exists a non-trivial linear combination of the formal sums  $\sum_{j=1}^{t_i} \alpha_j^{(i)} w_j^{(i)}$ that is zero.
We set $\theta_k(v_1,\ldots,v_k)$ to be the $\lambda_D$-formula that says that
$F^*(v_1,\ldots,v_k)$ is such that there exists no non-trivial zero linear combination;
the above reasoning yields that $\theta_k(v_1,\ldots,v_k)$ should say that $F^*(v_1,\ldots,v_k)$ is isomorphic to one of finitely many vertex-colored rooted forests,
i.e., there exists a $\lambda_D$-formula with these properties.

The next lemma follows from the construction and the definition of an interpretation. 
\begin{lemma}\label{aux558}
Let $M$ be a finite $\F_q$-representable matroid of branch-depth at most $d$, 
 and let $(F,c_M)$ be a forest representation of $M$. 
Then $\bI((F,c_M))\cong M$. 
\end{lemma}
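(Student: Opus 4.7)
The plan is to verify two things: first, that the domain of $\bI((F,c_M))$ is in bijection with $M$; and second, that the relations $I_k$ induced by the formulas $\theta_k$ coincide with the independence relations of $M$ under this bijection.

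For the domain, observe that $\theta_E$ was chosen to be equality on singletons and $\Theta(x) \equiv x=x$, so the elements of $\bI((F,c_M))$ are precisely the equivalence classes $[v]=\{v\}$ for $v \in V(F)$. By construction of $F=F(M)$, the bijection $g$ between $V(F)$ and the elements of $M$ is built in: non-root vertices of the tree $T$ correspond to the elements of the base $B$ (via the bijection $f|_B$), and the added leaves correspond to the remaining elements. Thus $V(F)$ is canonically identified with $M$.

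The heart of the argument is to show that, under this identification, each vertex $v \in V(F)$ has a formal sum that, when interpreted in the ambient vector space, equals the vector representing $g^{-1}(v)$. I would proceed case by case. If $v$ comes from a non-root vertex of $T$ of depth $d$, then $c_M(v)$ is the unit vector $e_d$, so the formal sum $\sum_{j=1}^{d}\alpha_j^{(v)} w_j^{(v)}$ collapses to $b_v = g^{-1}(v)$, which is exactly the corresponding base vector. If $v$ is one of the newly attached leaves representing some $e \notin B$, then $c_M(v)=(\alpha_1,\ldots,\alpha_t,0,\ldots,0)$ was defined so that $e = \sum_{j=1}^t \alpha_j b_{v_j}$, with $v_1,\ldots,v_t$ being the non-root ancestors of $v$ in $T$; the corresponding formal sum is therefore precisely the vector $e$. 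Since the base vectors $b_v$ are linearly independent (they form a base of $M$), every formal $\F_q$-linear combination of the formal sums $\sum_j \alpha_j^{(i)} w_j^{(i)}$ vanishes if and only if the corresponding linear combination of the actual vectors vanishes.

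With this correspondence established, the conclusion is immediate: by the definition of $\theta_k$, the tuple $(v_1,\ldots,v_k)$ satisfies $\theta_k$ in $(F,c_M)$ exactly when the shape of $F^\ast(v_1,\ldots,v_k)$ together with the colors $c_M(v_i)$ admits no non-trivial zero linear combination of the associated formal sums, which by the previous paragraph is equivalent to $g^{-1}(v_1),\ldots,g^{-1}(v_k)$ being linearly independent, i.e., independent in $M$. Hence $\bI((F,c_M))$ has the same independent sets as $M$ under $g$, giving the required isomorphism. The only mild technical point is checking that the finitely many isomorphism types of $F^\ast(v_1,\ldots,v_k)$ together with their colorings can indeed be captured by a single $\lambda_D$-formula $\theta_k$, but this is immediate because both the depth and the color set are bounded (by $D$ and $q^D$ respectively), so $\theta_k$ may be taken as a finite disjunction over all such vertex-colored rooted forests witnessing independence.
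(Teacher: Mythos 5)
Your proof is correct and supplies the verification that the paper leaves implicit: the paper simply states that Lemma~\ref{aux558} ``follows from the construction and the definition of an interpretation,'' with the intended argument sketched in the paragraph preceding the lemma. Your two-case analysis (a vertex of $T$ recovers its own base vector via the unit-vector color; an attached leaf recovers the dependent element via the stored coefficients), together with the observation that the ancestors appearing in the formal sums are always non-root vertices of $T$ and hence correspond to the linearly independent base $B$ so that formal vanishing is equivalent to vector vanishing, is exactly the reasoning the paper relies on, and your note that $\theta_k$ is a finite disjunction because both the depth and the palette are bounded correctly addresses the only point requiring care in the definition of the interpretation scheme.

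One small remark: the paper indexes the formal sum as $\sum_{j=1}^{d_i}\alpha_j^{(i)}w_j^{(i)}$ where $d_i$ is the depth of $v_i$ in $F$, while coloring a vertex of $T$ by $e_d$ with $d$ its depth in $T$; taken literally these two conventions are off by one (the coordinate $d$ falls outside the range $1,\ldots,d_i$). Your reading, where the sum runs over all vertices of the path including $v_i$ itself (equivalently, $\sum_{j=1}^{d_i+1}$ in the paper's indexing), is the one that makes both cases consistent and is clearly what the authors intend. It is good that you silently adopted the coherent reading, and worth being aware that the paper's displayed formula has this minor indexing slip.
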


We are now ready to prove our main theorem. 
\begin{proof}[Proof of Theorem \ref{thm-branchdepth}]
Let $q$ be a prime power, $d$ an integer, and  
$(M_n)_{n\in N}$ a first order convergent sequence of matroids such that each matroid $M_n$
is representable over the field $\F_q$ and has branch-depth bounded by $d$. 
Let $(F_n,c_n)_{n\in\N}$ be the corresponding sequence of forest representations of depth at most $D:=4^d$.  
Recall that each $c_n$ uses at most $q^{D}$ colors. 

By compactness (see~\cite{bib-folim1,bib-folim2}),
we can assume that $(F_n,c_n)_{n\in \N}$ is first order convergent (otherwise, we pick 
a first order convergent subsequence). 
By Theorem \ref{limForests}, $(F_n,c_n)_{n\in \N}$ has a limit modeling $(\bF,{\bf c})$. 
Note that $\bF$ has depth at most $D$ and ${\bf c}$ is an $\F_q^D$-coloring.

We now show that the assumptions of Lemma \ref{aux671} hold.  
Since $F_n\models \theta_0(v)$ for any $n\in \N$ and any vertex $v$ of $F_n$, 
the condition $(1)$ of Lemma \ref{aux671} trivially holds. 
Finally, the condition $(2)$ holds by Proposition \ref{aux089} and since there is a bijection 
$g_n$ between the elements of $M_n$ and the vertices of $F_n$, for any $n\in \N$. 

It follows from Lemma \ref{aux671} that 
$\M:= \bI((\bF,{\bf c}))$ is a limit modeling for the sequence 
$(\bI(F_n,c_n))_{n\in\N}$, and thus of the sequence 
$(M_n)_{n\in\N}$, by Lemma~\ref{aux558}.
It remains to prove that $\M$ is an infinite matroid. 
For $k\in \N$, let $\cC_k$ be the $k$-element subsets $\{x_1,\ldots,x_k\}\se \M$ 
of $\M$ that satisfy 
$$\M \models \neg I_k(x_1,\ldots ,x_k) \land \bigwedge_{1\leq i \leq k} I_{k-1}(x_1,\ldots,x_{i-1},x_{i+1},\ldots,x_k).$$ 
We show that $\cC:=\bigcup_{k\geq 1} \cC_k$ is a collection of circuits of $\M$. 

First, note that for every $n\in \N$, since the branch-depth of each $M_n$ is at most $d$, 
the length of every circuit of $M_n$ is at most $2^d$, by Proposition \ref{p.circuit}. 
It follows that the length of every circuit in $\M$ (i.e.,~the size of each element in $\cC$) is at most $2^d$ 
(since, e.g.,~the first order formula stating that there is no circuit of size $2^d+1$ 
holds in each $M_n$ and hence in $\M$). 
Thus, the axioms (C1), (C2) and (C3') trivially hold since they are equivalent to a finite set 
of first order axioms.
Finally, since all circuits of $\M$ are finite, $\M$ is finitary, and thus, as mentioned in Subsection \ref{subs:InfMatroids}, 
it is also a finitary matroid. 
\end{proof} 

\section{Non-existence of matroid limit modelings}
\label{sect-non-exist}

In this section, we show that neither of the assumptions in Theorem \ref{thm-branchdepth}  
can be removed.  
In particular, we prove the following two results. 
\begin{theorem} 
\label{t.non}
There exists a first order convergent sequence of $\Q$-representable matroids 
of rank three 
that has no limit modeling. 
\end{theorem}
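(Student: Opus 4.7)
The strategy is to reduce to \cite[Lemma~18]{bib-folim2}: the sequence of Erd\H os--R\'enyi random graphs $G(n,1/2)$ is almost surely first-order convergent but has no limit modeling. I will encode a fixed such realization $(G_n)_{n\in\N}$ into rank-three $\Q$-representable matroids $(M_n)_{n\in\N}$ in such a way that an interpretation scheme $\bI$ recovers $G_n$ from $M_n$; Lemma~\ref{aux671} then converts any hypothetical limit modeling of $(M_n)$ into a limit modeling of $(G_n)$, yielding the contradiction.

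For each $n$, set $K(n):=n^2$ and choose vectors $v_1,\ldots,v_n\in\Q^3$ in general position (any three linearly independent). For every edge $\{i,j\}\in E(G_n)$, choose an algebraically independent vector $w_{ij}$ in the plane spanned by $v_i$ and $v_j$ that is parallel to neither. The matroid $M_n$ is the $\Q$-vector matroid on the ground set consisting of $K(n)$ distinct nonzero scalar multiples $p_i^{(1)},\ldots,p_i^{(K(n))}$ of each $v_i$ together with the vectors $w_{ij}$. It has rank three and is $\Q$-representable; algebraic independence ensures that its $2$-circuits are exactly the parallel pairs $\{p_i^{(a)},p_i^{(b)}\}$ and its $3$-circuits are exactly the triples $\{p_i^{(a)},p_j^{(b)},w_{ij}\}$ with $\{i,j\}\in E(G_n)$.

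Define the formulas
\begin{align*}
\theta_0(x)   &\ :=\ \exists y\,(y\ne x\wedge\neg I_2(x,y)),\\
\theta_E(x,y) &\ :=\ \theta_0(x)\wedge\theta_0(y)\wedge(x=y\vee\neg I_2(x,y)),\\
\theta_R(x,y) &\ :=\ \exists z\,(\neg\theta_0(z)\wedge\neg I_3(x,y,z)),
\end{align*}
so that $\theta_0$ isolates the ``$p$-type'' elements, $\theta_E$ encodes the parallel equivalence on them, and $\theta_R$ encodes the induced edge relation. These data define an interpretation scheme $\bI$ of exponent one of graphs in matroids, and by construction $\bI(M_n)\cong G_n$. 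A direct count yields
$$\langle\theta_0,M_n\rangle=\frac{nK(n)}{nK(n)+|E(G_n)|}\to 1,\qquad \langle\tilde\bI(\phi),M_n\rangle=\left(\frac{nK(n)}{nK(n)+|E(G_n)|}\right)^{k}\langle\phi,G_n\rangle$$
for every graph formula $\phi$ with $k$ free variables, verifying conditions~(1) and~(2) of Lemma~\ref{aux671}. The first-order convergence of $(G_n)$ thus transfers to $(M_n)$ (after passing to a subsequence if necessary by the standard compactness argument), and any limit modeling $\M$ of $(M_n)$ would give $\bI(\M)$ as a limit modeling of $(G_n)$, contradicting \cite[Lemma~18]{bib-folim2}.

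The delicate step is the calibration of the multiplicity $K(n)$: it must grow fast enough for $p$-type elements to asymptotically dominate $M_n$ (needed for condition~(1) of Lemma~\ref{aux671}), while the counting in condition~(2) must still faithfully recover graph Stone pairings from matroid ones; the choice $K(n)=n^2$ achieves both. A secondary technical point is to verify the absence of accidental $2$- and $3$-circuits in $M_n$, which follows from the algebraic independence of the chosen vectors together with the fact that every projective line meeting three of the chosen points already determines one of the lines $v_iv_j$.
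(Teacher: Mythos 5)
Your construction, interpretation scheme, and choice of multiplicity $K(n)=n^2$ are essentially identical to the paper's: both encode a first-order convergent graph sequence with no limit modeling into rank-$3$ $\Q$-representable matroids via parallel classes for vertices and a generic collinear point for each edge, then invoke Lemma~\ref{aux671} to transport a hypothetical matroid limit modeling back to a graph limit modeling. However, your edge formula has a genuine bug. You set
$$\theta_R(x,y)\ :=\ \exists z\,\bigl(\neg\theta_0(z)\wedge\neg I_3(x,y,z)\bigr),$$
but if $x$ and $y$ are parallel (or equal) vertex elements, then the tuple $(x,y,z)$ is already dependent for \emph{every} $z$, so $\theta_R(x,y)$ holds as soon as $M_n$ has at least one edge element. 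Consequently $\bI(M_n)$ is not $G_n$ but $G_n$ with a loop at every vertex, and your claim $\bI(M_n)\cong G_n$ is false. The fix is to add a guard forbidding parallel pairs, e.g.\ require $I_2(x,y)$ in $\theta_R$, exactly as the paper does in its $\theta_1$:
$$\theta_1(x,y)\ \equiv\ I_2(x,y)\wedge\exists z\bigl(I_2(x,z)\wedge I_2(y,z)\wedge\neg I_3(x,y,z)\bigr).$$
Without this guard the final contradiction does not go through directly, since $\bI(\M)$ would only be a limit modeling of the looped graphs $(G_n')$, not of $(G_n)$; one would need an extra reduction from $(G_n')$ to $(G_n)$, which you have not supplied. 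A secondary, smaller imprecision: the displayed identity $\langle\tilde\bI(\phi),M_n\rangle=\bigl(\frac{nK(n)}{nK(n)+|E(G_n)|}\bigr)^k\langle\phi,G_n\rangle$ depends on $\tilde\bI(\phi)$ evaluating to false whenever some coordinate fails $\theta_0$, which Proposition~\ref{aux089} does not guarantee; the safe statement is the asymptotic one $\langle\tilde\bI(\phi),M_n\rangle=\langle\phi,G_n\rangle+o(1)$, which is what the paper uses and is all that Lemma~\ref{aux671} requires.
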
 
\begin{theorem} 
\label{t.non2}
There exists a first order convergent sequence of binary matroids that has no limit modeling.
\end{theorem}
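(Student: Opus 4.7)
My plan is to exhibit a first-order convergent sequence of binary matroids whose hypothetical limit modeling would yield, via an interpretation scheme, a limit modeling of a graph sequence already known to admit none. The natural candidate is $M_n := M(G_n)$, where $(G_n)_{n\in\N}$ is an almost-sure realization of the Erd\H{o}s--R\'enyi random graphs $G(n,1/2)$: by the 0-1 law, the sequence $(G_n)$ is first-order convergent, and by \cite[Lemma 18]{bib-folim2} it has no limit modeling. Each $M_n$ is binary, and $G_n$ is $3$-connected almost surely for large $n$, so Whitney's $2$-isomorphism theorem recovers $G_n$ uniquely from $M_n$.

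The first step is to verify first-order convergence of $(M_n)$ in the matroid language $\lambda_M$. I would construct an interpretation scheme $\bI$ of graphs in matroids with the property that $\bI(M(G))\cong G$ for every $3$-connected $G$, by defining the vertices of $G$ as equivalence classes of the matroid-first-order relation ``edges sharing an endpoint'' (expressible, for instance, through the existence of a third element completing a triangle, or through longer combinatorial witnesses when such triangles are absent). Using Proposition~\ref{aux089}, every formula $\phi\in\mathrm{FO}(\lambda_M)$ pulls back to a graph formula $\tilde{\bI}(\phi)$, and convergence of the graph Stone pairings on $G_n$ transfers to convergence of $\langle\phi,M_n\rangle$ up to error terms vanishing because atypical elements of $M(G_{n,1/2})$ (parallel edges, vertices of low degree, edges in short cycles) form a vanishing fraction of the ground set.

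If a limit modeling $\M$ of $(M_n)$ existed, then applying Lemma~\ref{aux671} to $\bI$ would produce $\bI(\M)$ as a limit modeling of $(\bI(M_n)) = (G_n)$, contradicting \cite[Lemma 18]{bib-folim2}. The main obstacle will be the careful handling of the measure transfer required by condition~(2) of Lemma~\ref{aux671}: the matroid Stone pairings are uniform averages over ``edges'' of $M_n$, whereas the intended graph Stone pairings are uniform averages over ``vertices'', and the equivalence classes of edges sharing a vertex grow unboundedly (typical degree $\sim n/2$ in $G_{n,1/2}$). To make the transfer go through, one must either rescale appropriately, pass to a thinned sequence, or engineer the interpretation scheme so that the induced quotient measure on ``vertex classes'' matches the uniform measure on $V(G_n)$ in the limit; this is where the bulk of the technical work will lie. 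An analogous strategy, replacing graphic matroids by a rank-three $\Q$-representable encoding (for example point--line configurations built from $G(n,1/2)$ using distinct primes or similarly rigid coordinates), would handle Theorem~\ref{t.non}.
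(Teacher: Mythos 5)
Your high-level strategy — interpret a graph sequence with no limit modeling inside a sequence of binary matroids, then invoke Lemma~\ref{aux671} for the contradiction — is exactly the paper's. But the specific matroid you pick, the graphic matroid $M(G_n)$ of $G_n\sim G(n,1/2)$, creates two obstacles, one of which you flag and one of which you don't, and neither is resolved.

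First, the unflagged obstacle: the binary relation ``edges sharing an endpoint'' on the ground set of $M(G)$ is not transitive (the edges $uv$, $vw$, $wx$ give a counterexample), so it cannot serve as the formula $\theta_E$ in an interpretation scheme, which must define an equivalence relation. Recovering the vertex set of a $3$-connected graph from its cycle matroid in a uniformly bounded-arity first-order way is genuinely nontrivial — vertices correspond to vertex-bonds, i.e.~cocircuits, which have unbounded size in $G(n,1/2)$ — and your parenthetical ``longer combinatorial witnesses when such triangles are absent'' is a placeholder, not a construction. Second, the obstacle you do flag — the quotient measure on putative vertex-classes would be degree-weighted, not uniform, and the classes grow like $n/2$ — is real, and you explicitly defer it (``this is where the bulk of the technical work will lie'').

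The paper avoids both issues by not using $M(G)$ at all. It uses the binary matroid $M'(G,k)$ whose ground set consists, for each vertex $v$, of $k$ copies of the unit vector $e_v\in\F_2^V$, plus, for each edge $uv$, a single element $e_u+e_v$. Now the parallel relation $\theta_E=\neg I_2$ is a genuine equivalence, the class $C(v)$ of each vertex has the same size $k$ by fiat, vertex elements are definable as those with a parallel partner, and taking $k=|G_n|^2$ makes edge elements a vanishing fraction of the ground set, so both conditions of Lemma~\ref{aux671} hold with uniform measures and no rescaling. In short, the paper resolves your stated difficulty by engineering the matroid rather than the interpretation scheme; your proposal correctly identifies the problem but stops short of a construction that actually closes it.
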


\subsection{Matroids with rank three}\label{fuenfPunkt1}

We now construct a sequence of matroids of rank three, each representable over the field of rationals, that has no 
limit modeling. 
We start with describing a procedure to convert a graph $G$ to a $\Q$-representable matroid of rank at most three 
in a way that the first order properties of $G$ are preserved. 
The existence of a first-order convergent sequence of graphs without a limit modeling (e.g.,~a sequence of Erd\H{o}s-R\'enyi random graphs 
has this property with probability one~\cite[Lemma 18]{bib-folim2}) 
allows us to deduce that there exists a first order convergent sequence of such matroids that does not have a limit modeling.

Let $G$ be a graph on $n$ vertices. 
We construct a matroid $M(G,k)$, where $k\in\N$ is a parameter. 
We first give the construction for $k=1$. 
The construction is illustrated in Figure \ref{fig.rank3}.  
\begin{figure}
\begin{center}
\epsfbox{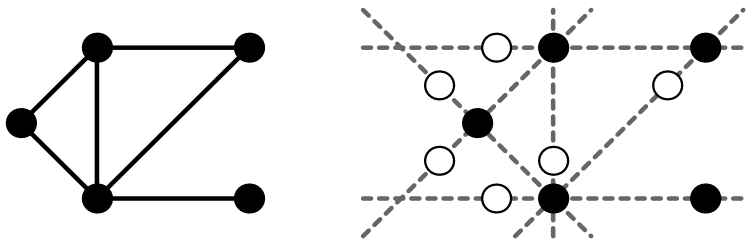}
\end{center}
\caption{Encoding a general graph $G$ in a rank 3 matroid $M(G, 1)$.}
\label{fig.rank3}
\end{figure}
Let $P=\{p_v: v\in V(G)\}$ be a set of $n$ points in the Euclidean plane $\R^2$ in general position. 
For every edge $uv\in E(G)$, add a new point $p_{uv}$ to the set $P$ on the line through the points $p_u$ and $p_v$ 
such that $p_{uv}$ does not lie on any line passing through another pair of the points (of $P$ and the newly added points).
The ground set of the matroid $M(G,1)$ is $P$. 
The bases $\mathcal{B}$ of $M(G,1)$ are all triples 
$\{x,y,z\}$ of $M(G,1)$ such that $x$, $y$, and $z$ do not lie on a line. 
Equivalently, the circuits are all sets of size four 
and all triples $\{p_u,p_v,p_{uv}\}$ where $uv\in E(G)$. 
The matroid $M(G,1)$ has rank at most 3 and it is $\Q$-representable. 
Note that the matroids obtained for a different initial choice of $P$ are isomorphic.

For $k\geq 2$, $M(G, k)$ is constructed from $M(G,1)$ by adding $k-1$ parallel elements to $p_v$ for every $v\in V(G)$. 
This class of $k$ parallel elements corresponding to $p_v$ is denoted by $C(v)$, and we say that an element in $C(v)$ {\em represents} 
the vertex $v$. We call an element of $M(G, k)$ a {\em vertex element} if it is contained in some $C(v)$, and an {\em edge element} otherwise.  

Let $k\geq 2$. To show that first order definable properties of $G$ are preserved in $M(G,k)$ let us define 
an interpretation scheme $\bI$ of $\lambda_G$-structures in $\lambda_M$-structures of exponent one. 
Note first that the formula $D_2\equiv\neg I_2 \in\mathrm{FO}_2(\lambda_M)$,
which captures the dependence of pairs of elements,  
defines an equivalence relation on the ground set of any such matroid $M(G,k)$. 
Further, note that edge elements can be distinguished from vertex elements in $M(G,k)$ since 
$x\in M(G,k)$ is a vertex element if and only if  
$M(G,k) \models \, \exists\, y\, \left( x\neq y \land \neg I_2(x,y)\right)$. 
We set 
$\theta_E(x,y) \equiv D_2(x,y) \equiv\neg I_2(x,y)$ and
$\theta_0 \equiv \big(\exists\, y\, \left( x\neq y \land \neg I_2(x,y)\right)\big)$. 
Finally, we define the formula $\theta_1\in FO_2(\lambda_M)$ to represent the edge-relation as  
$$\theta_1 (x,y)\  \equiv\  I_2(x,y)\land \exists z \left( I_2(x,z) \land I_2(y,z) \land \neg I_3(x,y,z)\right).$$ 
It is clear that $\theta_0$ and $\theta_1$ are compatible with $\theta_E$. 
That is, $\bI$ defined by the formulas $\theta_E$, $\theta_0$, and $\theta_1$ is an interpretation scheme of exponent one. 
The construction and the definition of the interpretation $\bI(M(G,k))$ yield the following. 

\begin{lemma}\label{aux712}
For any graph $G$ and $k\geq 2$, it holds that 
$\bI(M(G,k)) \cong G$. 
\end{lemma}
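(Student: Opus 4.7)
The plan is to verify the isomorphism by separately identifying the vertex set and the edge relation of $\bI(M(G,k))$ with those of $G$, using the geometric description of $M(G,k)$ as a (generic) rank-three affine matroid in the plane.

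First I would analyze the equivalence relation $\theta_E$ on the ground set of $M(G,k)$. Since $M(G,k)$ has no loops, $\neg I_2(x,y)$ holds iff $x=y$ or $x,y$ are parallel, so the $\theta_E$-classes are exactly the parallel classes of $M(G,k)$. By construction, these are the sets $C(v)$ for $v\in V(G)$ (each of size $k$) together with the singleton classes $\{p_{uv}\}$ for $uv\in E(G)$; the latter are singletons because each edge point was placed generically on its line so as to be parallel to no other element. Next, I would check that $\theta_0$ selects exactly the vertex elements: since $k\ge 2$, every element of some $C(v)$ has a distinct parallel partner and satisfies $\theta_0$, while every edge element is alone in its parallel class and fails $\theta_0$. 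Consequently the domain of $\bI(M(G,k))$ is precisely $\{C(v):v\in V(G)\}$, giving a canonical bijection $\psi$ from $V(\bI(M(G,k)))$ to $V(G)$.

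The main step is to verify that $\theta_1$ encodes the edge relation, i.e., for distinct $u,v\in V(G)$ and arbitrary $x\in C(u)$, $y\in C(v)$, one has $M(G,k)\models\theta_1(x,y)$ iff $uv\in E(G)$. Note $I_2(x,y)$ holds since $x,y$ lie in distinct parallel classes. If $uv\in E(G)$, take $z:=p_{uv}$; then $z$ is not parallel to $x$ or $y$, and $\{p_u,p_v,p_{uv}\}$ is a circuit of $M(G,1)$, so $\{x,y,z\}$ is dependent in $M(G,k)$, providing the required witness. Conversely, suppose some $z$ witnesses $\theta_1(x,y)$; then $z$ lies in a parallel class distinct from $C(u)$ and $C(v)$, and the affine (rank-three) representation forces the three points representing the classes of $x,y,z$ to be collinear in $\R^2$. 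By the general position of $\{p_w : w\in V(G)\}$ and the fact that each edge point $p_{u'v'}$ was chosen off every line through two previously constructed points besides $p_{u'}p_{v'}$, the only collinear triples of distinct points in the construction are $\{p_u,p_v,p_{uv}\}$ for edges $uv\in E(G)$. Hence $z\in C(w)$ only if $\{u,v\}=\{u',v'\}$ for the corresponding edge, forcing $uv\in E(G)$.

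Combining these three observations, $\psi$ transports the edge relation of $\bI(M(G,k))$ onto the edge relation of $G$, yielding the desired isomorphism. The main technical obstacle is the converse direction of the $\theta_1$ analysis: it requires unpacking the general-position assumption to rule out accidental collinearities among vertex points and edge points, so that the only rank-two dependency spanning two vertex classes is the one witnessed by the dedicated edge point.
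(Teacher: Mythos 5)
Your proof is correct and amounts to a careful verification of what the paper treats as immediate from the definitions (the paper offers no proof of Lemma~\ref{aux712}, stating only that it ``yields'' from the construction and the definition of $\bI$). Your three steps match the intended argument exactly: identifying the $\theta_E$-classes with the parallel classes, using $k\ge 2$ to show $\theta_0$ picks out precisely the vertex classes $C(v)$, and then checking both directions of the edge encoding via $\theta_1$. For the converse direction of the $\theta_1$ check, your appeal to general position is the right and only nontrivial point; note that since the witness $z$ appears with $x\in C(u)$ and $y\in C(v)$ in the dependent triple, you only ever need to rule out accidental collinearities of the form ``two vertex points and a third point,'' which is exactly what general position of the $p_w$ (eliminating a third vertex point) and the generic placement of each $p_{u'v'}$ off every other line (eliminating a foreign edge point) handle. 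No gap.
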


We are ready to prove Theorem \ref{t.non}. 

\begin{proof}[Proof of Theorem \ref{t.non}]
Let $(G_n)_{n\in\N}$ be a first order convergent sequence of graphs that does not have a limit modeling. 
Consider now the sequence $(M_n)_{n\in \N}$ where $M_n= M(G_n, |G_n|^2)$. 
We may assume that $(M_n)_{n\in\N}$ is first order convergent 
(otherwise, we consider a first order convergent subsequence, which exists by compactness). 
Let $\bI$ be the interpretation scheme defined earlier, and let 
$\tilde{\bI}:\mathrm{FO}(\lambda_G)\to \mathrm{FO}(\lambda_M)$ be the mapping from Proposition \ref{aux089}. 
We claim that the conditions in Lemma \ref{aux671} are satisfied for the sequence $(M_n)_{n\in\N}$. 
Let $x\in M_n$ be an element chosen uniformly at random. 
Then the probability that $x$ is an edge element is 
$\frac{|E(G_n)|}{|E(G_n)|+ |G_n|^3}\leq \frac{1}{|G_n|}$. 
Since $|G_n|$ tends to infinity, 
$\lim_{n\rightarrow \infty} \langle \theta_0,M_n\rangle = 1$ 
and the first condition in Lemma~\ref{aux671} holds. 

Now for $\ell\in \N$, consider $\phi\in\mbox{FO}_\ell(\lambda_G)$. 
Let $(e_1,\ldots,e_\ell)$ be an $\ell$-tuple chosen uniformly at random from $M_{n}$. 
Since  $\lim_{n\rightarrow \infty} \langle \theta_0,M_n\rangle = 1$,  
the probability that at least one $e_j$ is not a vertex element in $M_{n}$ 
tends to zero. 
Since in $M_n$ the equivalence classes $C(v)$ for $v\in V(G_n)$ have all the same size, 
it follows from Proposition~\ref{aux089} and Lemma~\ref{aux712} that 
$$\langle \phi,G_n\rangle = \langle \phi,\bI(M_n)\rangle = \langle \tilde{\bI}(\phi),M_n\rangle +o(1).$$
So, the second condition in Lemma~\ref{aux671} also holds. 

Assume now that $(M_{n})_{n\in \N}$ has a matroid limit modeling $\M$. 
By Lemma \ref{aux671}, $\G:= \bI(\M)$ is a limit modeling for the sequence 
$(\bI(M_n))_{n\in\N}$, that is for $(G_n)_{n\in\N}$ by Lemma~\ref{aux712}, 
a contradiction.
\end{proof}

\subsection{Binary matroids}

We describe an interpretation scheme of graphs in binary matroids. 
The argument is largely analogous to the one presented in Subsection~\ref{fuenfPunkt1}.

Let $G=(V,E)$ be a graph on $n$ vertices, and let $k\in \N$ be a parameter. 
We define a binary matroid $M'=M'(G,k)$ of rank $|V|$ in the following way. 
The matroid
$M'$ contains $k$ distinct elements represented by the unit vector $e_v\in \F_2^V$ for every vertex $v\in V$, and
$M'$ contains an element represented by $e_u+e_v$ for every edge $uv\in E$.
Recall the interpretation scheme $\bI$ of exponent one defined in Subsection~\ref{fuenfPunkt1}, and
observe that observe that the graph $\bI(M'(G,k))$ is isomorphic to $G$.
Theorem~\ref{t.non2} can now be proven in a way completely analogous to the proof of Theorem~\ref{t.non}.

\section{Concluding remarks}

The tree-depth of graphs is important in relation to testing graph properties in a fixed parameter way.
It is also important with respect to the structure of graphs in general. For example, for every $d$, there exists a finite set $\GG$ of graphs
with tree-depth at most $d$ such that each graph of tree-depth at most $d$ is homomorphically equivalent to one of the graphs in $\GG$.
Naturally, one may ask whether some of these results can be generalized to matroids using the branch-depth parameter introduced in this paper.

Independently of us, Matt DeVos and Sang-il Oum (private communication) were considering another tree-depth like parameter for matroids,
which was inspired by the work of Dittmann and Oporowski~\cite{bib-dittmann02+}.
They define a contraction-depth of a matroid recursively as follows.
A matroid that consists of loops and co-loops only has contraction-depth 0.
For other matroids, the contraction-depth of $M$ is the smallest $k$ that
there exists an element $e$ such that each component of $M/e$ has contraction-depth at most $k-1$.
As in the case of branch-depth, the contraction-depth of a matroid is both lower and upper bounded by the length of its longest circuit.
They have also been working on the variant of the parameter called deletion-depth defined similarly and
on generalizations for arbitrary connectivity functions,
which are of interest in relation to problems from discrete optimization.

\end{document}